\documentclass[12pt]{article}
\usepackage{amsmath}
\usepackage{amssymb}
\usepackage{amsthm}
\usepackage{mathabx}
\usepackage{caption}
\usepackage[usenames]{color}
\usepackage{amscd}
\usepackage{dsfont}
\usepackage{indentfirst}

\usepackage[colorlinks=true,linkcolor=blue,filecolor=red,
citecolor=webgreen]{hyperref}
\definecolor{webgreen}{rgb}{0,.5,0}

\def\C{{\mathds{C}}}

\def\N{{\mathds{N}}}
\def\Z{{\mathds{Z}}}
\def\1{{\bf 1}}

\def\lcm{\operatorname{lcm}}
\def\gcud{\operatorname{gcud}}
\def\SL_2{\operatorname{SL_2}}

\newtheorem{theorem}{Theorem}

\newtheorem{lemma}[theorem]{Lemma}
\newtheorem{corollary}[theorem]{Corollary}

\begin{document}
	
	\title{{\bf On the asymptotic density of the ordered pairs $(a,b)$ of positive integers such that
			$\gcd(ab,a+b)=\gcd(a,b)$}}
	\author{Steve Fan and L\'aszl\'o T\'oth}
	\date{}
	\maketitle
	
	\begin{abstract} Consider the arithmetic function of two variables $f(a,b)= \gcd(ab,a+b)/\gcd(a,b)$, investigated in a recent preprint.
		We deduce asymptotic formulas for sums of the form $\sum_{a,b\le x} h(f(a,b))$, where $h$ belongs to a certain class of arithmetic functions. In particular, we obtain an asymptotic formula for the number of ordered pairs $(a,b)\in \N^2$ such that $a,b\le x$ and $f(a,b)=m$, where $m\in \N$ is fixed.
		This shows that in the case $m=1$ the corresponding density is the quadratic class number constant 
		$C= \prod_p (1-1/(p^2(p+1))) \doteq 0.881513$. 
	\end{abstract}
	
	{\sl 2020 Mathematics Subject Classification}: Primary 11N37, Secondary 11A05, 11A25
	
	{\sl Key Words and Phrases}: arithmetic function of two variables, greatest common divisor, asymptotic density, asymptotic formula 
	
	\section{Introduction}
	
	It is well known that the probability that two random positive integers are relatively prime is $6/\pi^2\doteq 0.607927$.
	More precisely, this value is the asymptotic density of the ordered pairs $(a,b)$ such that
	$a,b\in \N:=\{1,2,\ldots \}$ are relatively prime. That is, 
	\begin{align*}
		\lim_{x \to \infty} \frac1{x^2} \sum_{\substack{a,b\le x\\ \gcd(a,b)=1}} 1 = \frac{6}{\pi^2}.
	\end{align*}
	
	In fact, one has the asymptotic formula 
	\begin{align}  \label{asympt_gcd}
		\sum_{\substack{a,b\le x\\ \gcd(a,b)=1}} 1 = \frac{6}{\pi^2}x^2 +O(x(\log x)^{2/3} (\log \log x)^{1/3}),
	\end{align}
	where the error term is inherited from the best known estimate for the mean value of Euler's totient function due to Liu 
	\cite{Liu2016}, improving the earlier known error by Walfisz \cite{Wal1963}.
	
	There are many other asymptotic formulas in the literature concerning the greatest common divisor (gcd) and least common multiple (lcm) in two or more variables.
	See, e.g., \cite{BMR2019, HT2021, HLT2020, HilTot2016, IMR2022, Kim2023, Tot2010} and their references.
	For example, formula
	\begin{equation} \label{gcd_a_b}
		\sum_{a,b\le x} \gcd(a,b)= \frac{x^2}{\zeta(2)}\left(\log x+ 2\gamma
		-\frac1{2}-\frac{\zeta(2)}{2}- \frac{\zeta'(2)}{\zeta(2)} \right) +
		O\left(x^{1+\theta+\varepsilon}\right)
	\end{equation}
	holds for every $\varepsilon>0$, where $\zeta$ is the Riemann zeta function, $\gamma$ is Euler's constant, and $\theta$ denotes 
	the exponent appearing in Dirichlet's divisor problem. Also,
	\begin{equation} \label{gcd_recipr_a_b}
		\sum_{a,b\le x} \frac1{\gcd(a,b)} = \frac{\zeta(3)}{\zeta(2)} x^2 + O\left(x(\log x)^{2/3}(\log \log x)^{1/3} \right),
	\end{equation}
	with the same error term as in \eqref{asympt_gcd}.
	
	Recently, Thang Pang Ern and Malcolm Tan Jun Xi \cite{TT2025} investigated the arithmetic function
	of two variables
	\begin{equation} \label{def_f}
		f(a,b)= \frac{\gcd(ab,a+b)}{\gcd(a,b)},
	\end{equation}
	having connections to the orbits of certain actions of the group $\SL_2(\Z)$.
	
	Let us denote $a=\delta i$, $b=\delta j$, where $\gcd(i,j)=1$. Then $\gcd(a,b)=\delta$ and we have $f(a,b)= \gcd(\delta^2ij,\delta(i+j))/\delta = \gcd(\delta ij, i+j)$, that is, 
	\begin{align} \label{form_f}
		f(a,b)= \gcd(\delta, i+j),
	\end{align} 
	since $\gcd(i,j)=1$ implies that $\gcd(ij,i+j)=1$. It follows that the function $f$ takes only positive integer values. Moreover, $f(a,b)\mid \gcd(a,b)$ for every $a,b\in \N$. 
	
	Hence, if $\gcd(a,b)=1$, then $f(a,b)=1$. The converse is not true. For example, $f(p^s,p^t)=1$ for all odd primes $p$
	and all $s,t\in \N$. Also, $f(2p,3p)=1$ for all primes $p\ne 5$.
	
	Therefore, a natural question is to find the asymptotic density of the ordered pairs $(a,b)\in \N^2$ such that $f(a,b)=1$.
	The authors \cite{TT2025} obtained that this asymptotic density is
	\begin{align} \label{def_C}
		C= \prod_p \left(1-\frac1{p^2(p+1)} \right) \doteq 0.881513,
	\end{align}
	the quadratic class number constant (see sequence A065465 of \cite{OEIS}).
	However, their proof is heuristic, using that for primes $p$ the events $p\mid i$ and $p\mid j$ are independent.
	
	The purpose of the present paper is to prove by exact elementary arguments that the corresponding 
	asymptotic density is indeed $C$. This is a consequence of the following asymptotic formula with a remainder term:
	
	\begin{equation} \label{sum_1}
		\sum_{\substack{a,b\le x \\ f(a,b)=1}} 1 = C x^2 + O\left(x(\log x)^3 \right), 
	\end{equation}
	where the constant $C$ is given by \eqref{def_C}. Formula \eqref{sum_1} is the analogue of \eqref{asympt_gcd}.
	
	We also deduce a similar formula for the number of ordered pairs $(a,b)\in \N^2$ such that $a,b\le x$ and $f(a,b)=m$, where $m\in \N$ is fixed, and more generally for sums of the form $\sum_{a,b\le x} h(f(a,b))$, where $h$ belongs to a certain class of arithmetic functions. In particular,
	we derive the following formula, concerning the mean value of the function $f(a,b)$:
	\begin{equation} \label{asympt_f}
		\sum_{a,b\le x} f(a,b) = K x^2 + O(x^{3/2}\log x),
	\end{equation}where
	\begin{equation} \label{K}
		K= \prod_p \left(1+\frac1{(p+1)^2}\right) \doteq 1.266558,
	\end{equation}
	(see sequence A065486 in \cite{OEIS}). Note that formula \eqref{asympt_f} is the analogue of \eqref{gcd_a_b}. 
	
	We remark that the gcd function $(a,b)\mapsto \gcd(a,b)$ and the lcm function
	$(a,b)\mapsto \lcm(a,b)$ are multiplicative, viewed as functions of two variables. See Section \ref{Section_Further_remarks} for this notion. The function $f$ defined by \eqref{def_f} is not multiplicative and it is more challenging to study its properties.
	
	Our main results are included in Section \ref{Section_Results}. The proofs, including the proofs of formulas \eqref{sum_1} and \eqref{asympt_f}, are given in Section \ref{Section_Proofs}. Some remarks and directions for further research are presented in Section \ref{Section_Further_remarks}. 
	
	We use the following notation: $\mu$ is the M\"obius function; $\tau(n)=\sum_{d\mid n} 1$ is the divisor function; $\alpha(n)$ is the 
	number of squarefull divisors of $n$; $\sigma(n)$ is the sum-of-divisors function; $\phi(n)=n\prod_{p\mid n} \left(1-1/p\right)$ is Euler's function; $\psi(n)=n\prod_{p\mid n} \left(1+1/p\right)$ denotes the Dedekind function;
	$\omega(n)$ and $\Omega(n)$ stand for the number of distinct prime divisors of $n$ and the number of prime power divisors of $n$, respectively; 
	$*$ denotes the convolution of arithmetic functions; $\sum_p$ and $\prod_p$ represent sums and products over the primes, respectively.
	
	\section{Results} \label{Section_Results}
	
	In the asymptotic formulas we assume that $x>1$ is a real number. We have the following general result.
	
	\begin{theorem} \label{Th_h} Let $h:\N \to \C$ be an arithmetic function such that $(\mu * h)(n)\ll n^{\beta}$, where
		$\beta<2$ is a fixed real number. Then
		\begin{equation*} 
			\sum_{a,b\le x} h(f(a,b))  =  x^2 \sum_{n=1}^{\infty} \frac{(\mu*h)(n)}{n^2\psi(n)} + R_{\beta}(x), 
		\end{equation*}
		where
		\begin{equation*} 
			R_{\beta}(x) \ll \begin{cases}   
				x^{1+\beta/2} \log x, & \text{ if $0<\beta<2$}, \\
				x(\log x)^3, & \text{ if $\beta=0$ and $\sum_{n=1}^{\infty} |(\mu*h)(n)|/n = \infty$}, \\ 
				x (\log x)^2, & \text{ if $\beta=0$ and $\sum_{n=1}^{\infty} |(\mu*h)(n)|/n < \infty$ or $\beta<0$}.
			\end{cases}
		\end{equation*}
	\end{theorem}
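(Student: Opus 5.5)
Write $h=\1*g$ with $g=\mu*h$, so that $h(n)=\sum_{d\mid n}g(d)$. Then
\begin{equation*}
\sum_{a,b\le x} h(f(a,b))=\sum_{d\le x} g(d)\, N_d(x), \qquad N_d(x)=\#\{a,b\le x: d\mid f(a,b)\}.
\end{equation*}
Here the restriction $d\le x$ is automatic, since $f(a,b)\mid\gcd(a,b)$. By \eqref{form_f}, $d\mid f(a,b)$ holds exactly when $d\mid a$, $d\mid b$ and $d\mid i+j$, where $a=\delta i$, $b=\delta j$ and $\gcd(i,j)=1$. The plan is to show that
\begin{equation*}
N_d(x)=\frac{x^2}{d^2\psi(d)}+E_d(x)
\end{equation*}
with a good bound for $E_d(x)$. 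We then sum against $g(d)$, complete the main series to infinity, and handle the tail.

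The cleanest way to obtain the main term is to rewrite the condition without the coprime decomposition. I claim that $d\mid f(a,b)$ is equivalent to $d\mid a$, $d\mid b$ and $d^2\mid \gcd(a,b)\,(a+b)/\gcd(a,b)\cdot{}$(something). To avoid that, I would instead count directly in the variables $(\delta,i,j)$ with $\gcd(i,j)=1$, $d\mid\delta$ and $d\mid i+j$. Write $\delta=dk$. The count becomes
\begin{equation*}
N_d(x)=\sum_{k\le x/d}\#\{i,j\le x/(dk):\ \gcd(i,j)=1,\ i+j\equiv 0 \ (\mathrm{mod}\ d)\}.
\end{equation*}
We remove the coprimality with M\"obius, $\sum_{e\mid \gcd(i,j)}\mu(e)$. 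If $\gcd(e,d)>1$, then $e\mid i$, $e\mid j$ and $d\mid i+j$ are still compatible, so we only need to count $i=ei'$, $j=ej'$ with $e i'+ej'\equiv 0\pmod d$, that is $i'+j'\equiv0 \pmod{d/\gcd(d,e)}$. A box of side $y$ contains $y^2/m+O(y)$ pairs $(i',j')$ with $i'+j'\equiv 0\pmod m$. This gives the inner count
\begin{equation*}
\sum_{e}\mu(e)\left(\frac{y^2\gcd(d,e)}{e^2 d}+O\!\left(\frac{y}{e}\right)\right), \qquad y=\frac{x}{dk}.
\end{equation*}
The multiplicative series evaluates as
\begin{equation*}
\sum_e \mu(e)\frac{\gcd(d,e)}{e^2}=\frac{1}{\zeta(2)}\prod_{p\mid d}\frac{1-1/p}{1-1/p^2}=\frac{1}{\zeta(2)}\,\frac{\phi(d)}{\psi(d)}.
\end{equation*}
Note that this computation does not actually require $d$ squarefree: only $e$ squarefree matters, and then $\gcd(d,e)$ depends only on which primes divide $d$. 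Truncating $e\le y$ costs $O(y\log y)$.

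Summing over $k$ then gives
\begin{equation*}
N_d(x)=\frac{x^2}{d^3}\,\frac{\phi(d)}{\zeta(2)\psi(d)}\,\zeta(2)\cdot\frac{d}{\phi(d)}\cdots
\end{equation*}
To pin this down precisely I would rather sum over $\delta$ with the condition $d\mid\delta$ only through the $k$-sum, which yields $\sum_k 1/k^2=\zeta(2)+O(d/x)$. The result is
\begin{equation*}
N_d(x)=\frac{x^2\,\phi(d)}{d^3\psi(d)}\cdot\frac{d}{\phi(d)}+E_d(x).
\end{equation*}
The factor $d/\phi(d)$ is where care is needed. Checking the case $d=p$ directly, the density of $\{p\mid\gcd(a,b),\ p\mid i+j\}$ should come out to $1/(p^2(p+1))$, and I would verify the constants against this (the discrepancy presumably arises because $\gcd(i,j)=1$ interacts with $d\mid i+j$ only through $\gcd(e,d)=1$, since $p\mid i$, $p\mid j$ would force $p\nmid$ nothing). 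In any case, the target main term is $x^2/(d^2\psi(d))$, and the error is
\begin{equation*}
E_d(x)\ll \sum_{k\le x/d}\frac{x}{dk}\log x\ll \frac{x}{d}(\log x)^2 .
\end{equation*}

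For the final step, note that $\sum_d |g(d)|\, x^2/(d^2\psi(d))$ converges since $\beta<2$, with tail $\sum_{d>x}\ll x^{\beta-2}$. This gives $x^2$ times the stated series, with error $O(x^\beta)$, plus
\begin{equation*}
\sum_{d\le x}|g(d)|\,\frac{x}{d}(\log x)^2 .
\end{equation*}
That bound is too weak for $\beta>0$. So the expected main obstacle is balancing: for $d\le x^{\beta'}$ use the asymptotic above, and for large $d$ use the trivial bound $N_d(x)\le (x/d)^2$. Splitting at $d=D$, with error terms $\sum_{d\le D}d^{\beta-1}x\log^2 x$ and $\sum_{d>D}d^{\beta}x^2/d^2$, the choice $D=x^{1/2}$ gives $x^{1+\beta/2}$. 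The $\log$ factor needs sharpening, say by using the error $O((x/d)\log x)$ via a better count: use $\sum_{e\le y}\mu(e)/e$-type savings, or sum over $k$ first so that only one logarithm survives. For $\beta=0$ the same sum gives $x(\log x)^2\sum_{d\le x}|g(d)|/d$. This is $O(x\log^2 x)$ if the series converges, and $O(x\log^3x)$ otherwise (since $|g|\ll1$ gives $\sum_{d\le x}|g(d)|/d\ll\log x$). The case $\beta<0$ is similar.
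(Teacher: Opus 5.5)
Your skeleton matches the paper's. You expand $h(n)=\sum_{d\mid n}g(d)$, put $\delta=dk$, and reduce to $N_d(x)=\sum_k V_d(x/(dk))$, where $V_d(y)=\#\{i,j\le y:\gcd(i,j)=1,\ d\mid i+j\}$. Evaluating $V_d(y)$ by M\"obius inversion on $\gcd(i,j)=1$ is a valid alternative to the paper's route through Lemmas~\ref{Lemma_1} and~\ref{Lemma_2}. However, the ``discrepancy'' $d/\phi(d)$ that you insert by hand is only an algebra slip. Since $\frac{1-1/p}{1-1/p^2}=\frac{1}{1+1/p}$, you get $\sum_e\mu(e)\gcd(d,e)/e^2=\frac{1}{\zeta(2)}\cdot\frac{d}{\psi(d)}$, not $\frac{1}{\zeta(2)}\cdot\frac{\phi(d)}{\psi(d)}$. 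With that corrected, your computation yields $V_d(y)=\frac{y^2}{\zeta(2)\psi(d)}+O(y\log 3y)$ uniformly in $d$; the tail $e>y$ costs only $\ll \frac{y^2}{d}\cdot\frac{2^{\omega(d)}}{y}\le y$. This is exactly Lemma~\ref{Lemma_V}, and the main term $x^2/(d^2\psi(d))$ is then derived rather than asserted.

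The genuine gap is the case $0<\beta<2$. Your large-$d$ estimate does not give what you claim. With $N_d(x)\le (x/d)^2$, the range $d>x^{1/2}$ contributes $x^2\sum_{x^{1/2}<d\le x}d^{\beta-2}$. This is $\asymp x^{(3+\beta)/2}$ for $\beta<1$, $\asymp x^2\log x$ for $\beta=1$, and $\asymp x^{1+\beta}$ for $\beta>1$. You have confused $(x/d)^2$ with the size $x^2/(d^2\psi(d))$ of the main term, and no choice of $D$ does better than $x^{1+\beta}(\log x)^2$.

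The missing idea, used in the paper, is that $N_d(x)$ vanishes for large $d$. If $d\mid\delta$ and $d\mid i+j$ with $\delta i,\delta j\le x$, then $d\le i+j\le 2x/\delta$, so $d^2k\le 2x$ (equivalently, $f(a,b)^2\le a+b$). This has two consequences:
\begin{itemize}
\item The $d$-sum stops at $(2x)^{1/2}$, so the only tail is the main-term tail $x^2\sum_{d>(2x)^{1/2}}|g(d)|/(d^2\psi(d))\ll x^{1+\beta/2}$.
\item Since $k$ runs only up to $2x/d^2$, the error becomes $x\log x\sum_{k\le 2x}k^{-1}\sum_{d\le(2x/k)^{1/2}}d^{\beta-1}\ll x^{1+\beta/2}\log x\sum_k k^{-1-\beta/2}$. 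This also removes the extra logarithm you could not get rid of.
\end{itemize}
For $\beta\le 0$, your argument with the corrected constant does go through without this truncation.
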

	
	If $h(m)=1$ and $h(n)=0$ for $n\ne m$, then we have the following result.
	
	\begin{corollary} \label{Cor_m} Let $m\in \N$ be fixed. Then
		\begin{equation} \label{sum_density_m}
			\sum_{\substack{a,b\le x\\ f(a,b)=m}} 1  =  C_m x^2 + O\left(x(\log x)^3 \right), 
		\end{equation} 
		where 
		\begin{equation*} 
			C_m= \frac{C}{m^3} \prod_{p\mid m} \left(1-\frac{p^2}{p^3+p^2-1}\right),
		\end{equation*} 
		$C$ being defined by \eqref{def_C}.
	\end{corollary}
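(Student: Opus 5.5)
Corollary \ref{Cor_m} is the special case of Theorem \ref{Th_h} with $h=\1_{\{m\}}$, i.e.\ $h(m)=1$ and $h(n)=0$ for $n\ne m$. So the plan has three parts: determine $\mu*h$, check which case of the error term applies, and evaluate the main-term series in closed form.

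\textbf{The convolution and the error term.} We have
\[
(\mu*h)(n)=\sum_{d\mid n}\mu(n/d)h(d)=
\begin{cases}\mu(n/m), & m\mid n,\\ 0, & \text{otherwise.}\end{cases}
\]
Hence $|(\mu*h)(n)|\le 1$, and we may take $\beta=0$. Since
\[
\sum_{n}\frac{|(\mu*h)(n)|}{n}=\frac1m\sum_k\frac{\mu^2(k)}{k}=\infty,
\]
we are in the second case of Theorem \ref{Th_h}. This gives $R_0(x)\ll x(\log x)^3$, which is the error term claimed in \eqref{sum_density_m}.

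\textbf{The main term.} Writing $n=mk$, the density is
\[
C_m=\sum_{k\ge1}\frac{\mu(k)}{m^2k^2\psi(mk)}.
\]
The function $\psi$ is multiplicative but not completely multiplicative. So we split $k=k_1k_2$ with $k_1$ composed of primes dividing $m$ and $\gcd(k_2,m)=1$, which gives $\psi(mk)=\psi(mk_1)\psi(k_2)$. Only squarefree $k$ contribute. For a prime $p\mid m$ we have $\psi(mp)=p\,\psi(m)$, so $\psi(mk_1)=k_1\psi(m)$. Therefore
\[
C_m=\frac{1}{m^2\psi(m)}\prod_{p\mid m}\Bigl(1-\frac1{p^3}\Bigr)\prod_{p\nmid m}\Bigl(1-\frac1{p^2(p+1)}\Bigr).
\]
Next we rewrite the coprime product as
\[
C\prod_{p\mid m}\Bigl(1-\frac1{p^2(p+1)}\Bigr)^{-1},
\]
and use
\[
1-\frac1{p^2(p+1)}=\frac{p^3+p^2-1}{p^2(p+1)},
\qquad
\frac{1}{m^2\psi(m)}=\frac1{m^3}\prod_{p\mid m}\frac{p}{p+1}.
\]
For each $p\mid m$ the local factor then becomes
\[
\frac{p}{p+1}\cdot\frac{p^3-1}{p^3}\cdot\frac{p^2(p+1)}{p^3+p^2-1}=\frac{p^3-1}{p^3+p^2-1}=1-\frac{p^2}{p^3+p^2-1},
\]
which gives exactly the stated formula for $C_m$.

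\textbf{Main obstacle.} Nothing here is deep. The step that needs care is the bookkeeping with the non-completely-multiplicative $\psi(mk)$ when $\gcd(k,m)>1$; that is where I would double-check the local factors. As a sanity check, the case $m=1$ should give $C_1=C$, consistent with \eqref{sum_1}.
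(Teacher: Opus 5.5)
Your proposal is correct and follows the paper's proof. You apply Theorem~\ref{Th_h} with $h$ the indicator of $\{m\}$, observe that $(\mu*h)(n)=\mu(n/m)$ for $m\mid n$ (and $0$ otherwise) is bounded so $\beta=0$, and evaluate $\sum_k \mu(k)/(k^2m^2\psi(km))$ as an Euler product; the only cosmetic difference is that you handle $\psi(mk)$ by splitting $k=k_1k_2$ according to the primes dividing $m$, whereas the paper uses the identity $\psi(km)=\psi(k)\psi(m)\gcd(k,m)/\psi(\gcd(k,m))$, and both give the same local factors $1-1/p^3$ for $p\mid m$ and $1-1/(p^2(p+1))$ for $p\nmid m$.
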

	
	In the case $m=1$ formula \eqref{sum_density_m} recovers \eqref{sum_1} and $C_1=C$.  
	Some more numerical values of $C_m$, representing the asymptotic density of the ordered pairs $(a,b)\in \N^2$ such that $f(a,b)=m$ are
	\[
	C_2=\frac{7}{88}C \doteq 0.070120, \ C_3=\frac{26}{945}C \doteq 0.024253, \ C_4=\frac{7}{704}C \doteq 0.008765,
	\]
	\[
	C_5=\frac{124}{18625}C \doteq 0.005868, \ C_6=\frac{91}{41580}C \doteq 0.001929.
	\]
	
	For some other choices of $h$ we can deduce similar formulas. We present the following examples.
	
	\begin{corollary} {\rm ($h(n)=n^t$)} \label{Cor_n_beta} 
		If $t<2$ \textup{($t\ne 0$)} is a real number, then
		\begin{equation*} 
			\sum_{a,b\le x} (f(a,b))^t = x^2 \prod_p \left(1+ \frac{p(p^t-1)}{(p+1)(p^3-p^t)}\right) + E_t(x),
		\end{equation*} 
		where
		\begin{equation*} 
			E_t(x) \ll \begin{cases}   
				x^{1+t/2} \log x, & \text{ if $0<t<2$}, \\
				x(\log x)^3, & \text{ if $t<0$}.
			\end{cases}
		\end{equation*}
	\end{corollary}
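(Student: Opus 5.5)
This corollary follows from Theorem \ref{Th_h} applied with $h(n)=n^t$. The work is to compute $(\mu*h)$, check the growth hypothesis with the right $\beta$, and evaluate the constant as an Euler product.

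First, $h(n)=n^t$ is completely multiplicative, so $g:=\mu*h$ is multiplicative. On prime powers it is given by $g(p^\nu)=p^{\nu t}-p^{(\nu-1)t}=p^{(\nu-1)t}(p^t-1)$ for $\nu\ge 1$. That is, $g(n)=n^t\prod_{p\mid n}(1-p^{-t})$, a generalized Jordan-type function.

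Next we verify the hypothesis $g(n)\ll n^{\beta}$.
\begin{itemize}
\item If $0<t<2$, then $|g(n)|\le n^t$, so we may take $\beta=t$. Theorem \ref{Th_h} then gives the error $x^{1+t/2}\log x$.
\item If $t<0$, then $|1-p^{-t}|=p^{|t|}-1<p^{|t|}$. Hence $|g(n)|\le n^t\prod_{p\mid n}p^{|t|}=n^{t}\,\mathrm{rad}(n)^{-t}\le 1$, since $\mathrm{rad}(n)\le n$ and $-t>0$ give $\mathrm{rad}(n)^{-t}\le n^{-t}$. So $g(n)\ll 1$ and $\beta=0$ is admissible.
\end{itemize}
In the case $t<0$, Theorem \ref{Th_h} yields at worst the bound $x(\log x)^3$, whichever alternative for $\sum|g(n)|/n$ holds. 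This is exactly the stated $E_t(x)$, so there is no need to decide whether that series converges.

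Finally, we evaluate the constant $\sum_{n\ge1} g(n)/(n^2\psi(n))$. The summand is multiplicative, and it converges absolutely because $|g(n)|/(n^2\psi(n))\ll n^{\beta-3}$ with $\beta<2$. So the sum factors as $\prod_p\bigl(1+\sum_{\nu\ge1} g(p^\nu)/(p^{2\nu}\psi(p^\nu))\bigr)$. Using $\psi(p^\nu)=p^{\nu-1}(p+1)$, the local sum is
\[
\sum_{\nu\ge1}\frac{p^{(\nu-1)t}(p^t-1)}{p^{3\nu-1}(p+1)}=\frac{p^t-1}{p^{2}(p+1)}\sum_{\nu\ge1}p^{(\nu-1)(t-3)} =\frac{p^t-1}{p^2(p+1)}\cdot\frac{1}{1-p^{t-3}}=\frac{p(p^t-1)}{(p+1)(p^3-p^t)}.
\]
The geometric series converges because $t<3$. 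This matches the stated Euler product.

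The only step needing any care is the bound $g(n)\ll 1$ for $t<0$; the rest is routine bookkeeping.
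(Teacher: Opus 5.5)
Your proposal is correct and follows essentially the same route as the paper. You identify $\mu*h$ as the generalized Euler function $\phi_t(n)=n^t\prod_{p\mid n}(1-p^{-t})$, bound it by $n^t$ (for $0<t<2$) or by $1$ (for $t<0$) so that Theorem~\ref{Th_h} applies with $\beta=t$ or $\beta=0$, and evaluate the constant as an Euler product, additionally supplying the details the paper leaves implicit: the verification $|\phi_t(n)|\le 1$ for $t<0$ and the geometric-series computation of the local factor.
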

	
	In the case $t=1$ we recover formula \eqref{asympt_f}.
	
	\begin{corollary} {\rm ($h(n)=n^{-1}$)} \label{Cor_1_per_n} 
		\begin{equation} \label{sum_1_per_n}
			\sum_{a,b\le x} \frac1{f(a,b)} = L x^2  + O(x (\log x)^3),
		\end{equation} 
		where
		\begin{equation} \label{def_L}
			L=  \prod_p \left(1- \frac{p}{(p^2+1)(p+1)^2}\right).
		\end{equation} 
	\end{corollary}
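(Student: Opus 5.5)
This is the special case $h(n)=n^{-1}$ of Theorem \ref{Th_h}, so the plan is to check its hypothesis with $\beta=0$, read off the error term, and then evaluate the constant as an Euler product. Put $g=\mu*h$. Since $h$ is multiplicative, so is $g$, and on prime powers
\[
g(p)=\frac1p-1, \qquad g(p^k)=p^{-k}-p^{-k+1}=-\frac{p-1}{p^k}\quad (k\ge 1).
\]
In particular $g(n)=(-1)^{\omega(n)}\phi(n)/n^2$, where $\phi(n)/n^2$ is read multiplicatively, and so $|g(n)|\le 1$. Hence the hypothesis of Theorem \ref{Th_h} holds with $\beta=0$.

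To pick the right case of the error term, look at $\sum_n |g(n)|/n$. We have $|g(p)|/p=(p-1)/p^2\sim 1/p$, so $\sum_n |g(n)|/n\ge \sum_p (p-1)/p^2=\infty$. This is the divergent case, and Theorem \ref{Th_h} gives an error $O(x(\log x)^3)$, as claimed in \eqref{sum_1_per_n}. Taking a negative $\beta$ would not help, because $|g(p)|\to 1$.

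It remains to evaluate $\sum_n g(n)/(n^2\psi(n))$. The summand is multiplicative and $O(n^{-3})$, so the series converges absolutely and factors as $\prod_p \bigl(1+\sum_{k\ge1} g(p^k)/(p^{2k}\psi(p^k))\bigr)$. Using $\psi(p^k)=p^{k-1}(p+1)$, the local sum is
\[
-\frac{(p-1)p}{p+1}\sum_{k\ge1}p^{-4k}=-\frac{(p-1)p}{(p+1)(p^4-1)}=-\frac{p}{(p+1)^2(p^2+1)},
\]
since $p^4-1=(p-1)(p+1)(p^2+1)$. This is exactly the local factor of $L$ in \eqref{def_L}.

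As a consistency check, the $t=-1$ case of Corollary \ref{Cor_n_beta} gives the factor $1+\frac{p(p^{-1}-1)}{(p+1)(p^3-p^{-1})}=1-\frac{p(p-1)}{(p+1)(p^4-1)}$, which agrees. There is no real obstacle here. The only points needing care are noting that the divergent $\beta=0$ case applies, which fixes the exponent $3$ on the logarithm, and the algebraic simplification of the local factor.
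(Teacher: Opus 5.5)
Your proposal is correct and is essentially the paper's argument. The paper just invokes Corollary~\ref{Cor_n_beta} at $t=-1$, which is itself Theorem~\ref{Th_h} with $\beta=0$ and yields the same Euler factor you compute.

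One small slip: your closed form $g(n)=(-1)^{\omega(n)}\phi(n)/n^2$ is too small by a factor $\prod_{p\mid n}p$; the correct form is $g(n)=n^{-1}\prod_{p\mid n}(1-p)$, e.g.\ $g(p)=-(p-1)/p$. Nothing downstream is affected, because you use the correct prime-power values and $|g(n)|\le 1$ still holds.
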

	
	Compare formula \eqref{sum_1_per_n} to \eqref{gcd_recipr_a_b}. 
	The following result concerns the asymptotic density of the ordered pairs $(a,b)\in \N^2$
	such that $f(a,b)$ is squarefree. 
	
	\begin{corollary} {\rm ($h=\mu^2$)} \label{Cor_mu_2}
		\begin{equation*} 
			\sum_{a,b\le x} \mu^2(f(a,b)) = C_{\mu^2} x^2 + O\left(x(\log x)^2 \right),
		\end{equation*} 
		where
		\begin{equation*} 
			C_{\mu^2} = \prod_p \left(1-\frac1{p^5(p+1)}\right) \doteq 0.988504.
		\end{equation*} 
	\end{corollary}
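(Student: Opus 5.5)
We apply Theorem \ref{Th_h} with $h=\mu^2$. The whole task is to identify $\mu*\mu^2$, check the growth hypothesis, and evaluate the constant as an Euler product.

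\textbf{Step 1: the function $\mu*\mu^2$.} It is multiplicative. On prime powers, $(\mu*\mu^2)(p)=\mu^2(p)-\mu^2(1)=0$. For $k\ge 2$ we get $(\mu*\mu^2)(p^k)=\mu^2(p^k)-\mu^2(p^{k-1})$, which equals $-1$ when $k=2$ and $0$ when $k\ge 3$. So $(\mu*\mu^2)(n)=\mu(d)$ if $n=d^2$ with $d$ squarefree, and $0$ otherwise. In particular $|(\mu*\mu^2)(n)|\le 1$, so the hypothesis holds with $\beta=0$. Moreover
\[
\sum_{n\ge 1}\frac{|(\mu*\mu^2)(n)|}{n}=\sum_{d}\frac{\mu^2(d)}{d^2}<\infty,
\]
so the third case of Theorem \ref{Th_h} applies and gives $R_0(x)\ll x(\log x)^2$.

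\textbf{Step 2: the constant.} The summand $(\mu*\mu^2)(n)/(n^2\psi(n))$ is multiplicative and absolutely summable, so the series factors as an Euler product. Only the terms $1$ and $p^2$ contribute at each prime. Using $\psi(p^2)=p^2+p=p(p+1)$, the local factor is
\[
1-\frac{1}{p^4\,\psi(p^2)}=1-\frac{1}{p^4\cdot p(p+1)}=1-\frac{1}{p^5(p+1)}.
\]
This gives $C_{\mu^2}$ as stated.

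There is no real obstacle here. The only care needed is the prime-power computation of $\mu*\mu^2$, and checking that convergence of $\sum|(\mu*h)(n)|/n$ puts us in the $x(\log x)^2$ error case rather than the $x(\log x)^3$ one.
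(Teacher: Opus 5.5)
Your proposal is correct and matches the paper's proof: you apply Theorem \ref{Th_h} with $\beta=0$, observe that $\mu*\mu^2$ vanishes except at $p^2$ (where it equals $-1$), so that $\sum_n |(\mu*\mu^2)(n)|/n=\prod_p(1+1/p^2)<\infty$ puts you in the $x(\log x)^2$ case, and read off the local factor $1-1/(p^4\psi(p^2))=1-1/(p^5(p+1))$. Your version adds the explicit prime-power computation $\mu^2(p^k)-\mu^2(p^{k-1})$, which the paper states without derivation.
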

	
	For the constant $C_{\mu^2}$ see \cite{OEIS}, sequence A065468.
	
	\begin{corollary} {\rm ($h=\mu$)} \label{Cor_mu} For every $\varepsilon>0$,
		\begin{equation*} 
			\sum_{a,b\le x} \mu(f(a,b)) = x^2 \prod_p \left(1-\frac{2p^3-1}{p^5(p+1)}\right)  + O\left(x^{1+\varepsilon}\right).
		\end{equation*} 
	\end{corollary}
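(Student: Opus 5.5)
The plan is to apply Theorem~\ref{Th_h} with $h=\mu$; the whole argument reduces to identifying $\mu*\mu$ and computing an Euler product.

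First I determine $\mu*\mu$. It is multiplicative, being a convolution of multiplicative functions. On prime powers it equals the coefficient of $p^{-ks}$ in $(1-p^{-s})^2=1-2p^{-s}+p^{-2s}$. Hence
\begin{equation*}
(\mu*\mu)(p)=-2,\qquad (\mu*\mu)(p^2)=1,\qquad (\mu*\mu)(p^k)=0 \quad (k\ge 3).
\end{equation*}
In particular $|(\mu*\mu)(n)|\le \tau(n)$, which gives $(\mu*\mu)(n)\ll n^{\beta}$ for every fixed $\beta>0$.

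Next I invoke the theorem. Given $\varepsilon>0$, I take $\beta=\varepsilon$ (assuming $\varepsilon<2$ without loss of generality). Theorem~\ref{Th_h} in the case $0<\beta<2$ gives
\begin{equation*}
R_{\beta}(x)\ll x^{1+\varepsilon/2}\log x\ll x^{1+\varepsilon},
\end{equation*}
which is the claimed error term.

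It remains to evaluate the main-term constant $\sum_{n\ge1}(\mu*\mu)(n)/(n^2\psi(n))$. The summand is multiplicative and bounded by $\tau(n)/n^3$, so the series converges absolutely and factors as an Euler product. The local factor at $p$ is
\begin{equation*}
1+\frac{-2}{p^2\psi(p)}+\frac{1}{p^4\psi(p^2)}.
\end{equation*}
Using $\psi(p)=p+1$ and $\psi(p^2)=p(p+1)$, this equals
\begin{equation*}
1-\frac{2}{p^2(p+1)}+\frac{1}{p^5(p+1)}=1-\frac{2p^3-1}{p^5(p+1)},
\end{equation*}
matching the stated product.

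There is no real obstacle here: the only point needing care is the local factor computation, in particular using $\psi(p^2)=p^2+p$ correctly.
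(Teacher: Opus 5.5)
Your proposal is correct and follows the paper's proof: apply Theorem~\ref{Th_h} with $h=\mu$, note that $\mu*\mu$ is multiplicative with values $-2,1,0$ at $p,p^2,p^k$ ($k\ge 3$), bound it by $n^{\varepsilon}$, and evaluate the Euler product. The only difference is that you bound $|(\mu*\mu)(n)|$ by $\tau(n)$ where the paper uses $2^{\omega(n)}$, which works equally well, and your local factor computation with $\psi(p^2)=p(p+1)$ is correct.
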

	
	\begin{corollary} {\rm ($h=\tau$)} \label{Cor_tau}
		\begin{equation*} 
			\sum_{a,b\le x} \tau(f(a,b)) =  x^2 \prod_p \left(1+\frac{p}{(p+1)(p^3-1)}\right)  + O\left(x(\log x)^3 \right).
		\end{equation*} 
	\end{corollary}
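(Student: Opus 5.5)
Corollary \ref{Cor_tau} is the special case $h=\tau$ of Theorem \ref{Th_h}, so the plan is to compute $\mu*\tau$, check the hypothesis, identify which case of the error term applies, and evaluate the constant as an Euler product.

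\emph{The convolution and the error term.} Since $\tau=\1*\1$, we have $\mu*\tau=\1$, so $(\mu*h)(n)=1\ll n^{0}$. Thus the hypothesis of Theorem \ref{Th_h} holds with $\beta=0$. Moreover $\sum_{n\ge1}|(\mu*h)(n)|/n=\sum_n 1/n=\infty$, so we are in the second case of Theorem \ref{Th_h}. This gives $R_0(x)\ll x(\log x)^3$, which is exactly the stated error term.

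\emph{The constant.} It remains to evaluate
\[
\sum_{n=1}^\infty \frac{1}{n^2\psi(n)}.
\]
The function $n\mapsto 1/(n^2\psi(n))$ is multiplicative and the series converges absolutely, since $\psi(n)\ge n$. Hence it equals
\[
\prod_p \Bigl(1+\sum_{k\ge1}\frac{1}{p^{2k}\,p^{k}(1+1/p)}\Bigr)
=\prod_p\Bigl(1+\frac{p}{p+1}\sum_{k\ge1}p^{-3k}\Bigr).
\]
Here we used $\psi(p^k)=p^k(1+1/p)$. The geometric sum is $1/(p^3-1)$, so each local factor is
\[
1+\frac{p}{(p+1)(p^3-1)},
\]
which matches the stated product.

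No step is a real obstacle. The only point needing care is to confirm that the divergent case of Theorem \ref{Th_h} applies, since it determines the exponent $3$ rather than $2$ on $\log x$.
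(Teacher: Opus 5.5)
Your proposal is correct and is the paper's proof: you apply Theorem \ref{Th_h} with $\mu*\tau=\1$, which is the case $\beta=0$ with $\sum_n 1/n=\infty$ and gives the $x(\log x)^3$ error, and you evaluate $\sum_{n}1/(n^2\psi(n))$ as an Euler product. Your local-factor computation using $\psi(p^k)=p^{k-1}(p+1)$ is also right; you in fact spell out more detail than the paper does.
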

	
	\begin{corollary} {\rm ($h=\alpha$)} \label{Cor_alpha}
		\begin{equation*} 
			\sum_{a,b\le x} \alpha(f(a,b)) = x^2 \prod_p \left(1+\frac1{p^2(p+1)(p^3-1)}\right) + O\left(x(\log x)^2 \right).
		\end{equation*} 
	\end{corollary}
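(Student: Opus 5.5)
This follows from Theorem \ref{Th_h} with $h=\alpha$. The plan is to identify $\mu*\alpha$ explicitly, check which case of the remainder term applies, and then evaluate the constant as an Euler product.

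\textbf{Identifying $\mu*\alpha$.} Since $\alpha(n)=\sum_{d\mid n} \chi(d)$, where $\chi$ is the characteristic function of the squarefull integers, Möbius inversion gives $(\mu*\alpha)(n)=\chi(n)$. In particular $(\mu*\alpha)(n)\ll 1$, so we may take $\beta=0$. Moreover
\[
\sum_{n=1}^{\infty} \frac{|(\mu*\alpha)(n)|}{n}=\sum_{n \text{ squarefull}} \frac1{n}=\prod_p\Bigl(1+\frac1{p^2}+\frac1{p^3}+\cdots\Bigr)=\prod_p\Bigl(1+\frac{1}{p(p-1)}\Bigr)<\infty .
\]
Hence the third case of Theorem \ref{Th_h} applies, and the error term is $O(x(\log x)^2)$.

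\textbf{Evaluating the constant.} The main term is $x^2\sum_{n} \chi(n)/(n^2\psi(n))$. The summand is multiplicative, and $\psi(p^k)=p^k(1+1/p)=p^{k-1}(p+1)$. The local factor at $p$ is therefore
\[
1+\sum_{k\ge 2} \frac{1}{p^{2k}\,p^{k-1}(p+1)} = 1+\frac{p}{p+1}\sum_{k\ge2} p^{-3k} = 1+\frac{p}{p+1}\cdot\frac{p^{-6}}{1-p^{-3}} = 1+\frac{1}{p^2(p+1)(p^3-1)}.
\]
This matches the product in the statement. There is no real obstacle here: the only thing to verify is the absolute convergence of the series above, which puts us in the $(\log x)^2$ case rather than the $(\log x)^3$ case.
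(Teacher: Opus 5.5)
Your proposal is correct and matches the paper's proof: you identify $\mu*\alpha$ as the indicator of squarefull numbers, use the convergence of $\sum_{n}|(\mu*\alpha)(n)|/n=\prod_p\bigl(1+\frac{1}{p(p-1)}\bigr)$ to land in the $x(\log x)^2$ case of Theorem~\ref{Th_h}, and evaluate the constant as the same Euler product. Your computation of the local factor via $\psi(p^k)=p^{k-1}(p+1)$ also checks out.
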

	
	\begin{corollary} {\rm ($h=\omega$)} \label{Cor_omega}
		\begin{equation*}
			\sum_{a,b\le x} \omega(f(a,b)) = C_{\omega} x^2  + O\left(x(\log x)^3 \right),
		\end{equation*} 
		where
		\begin{equation*} 
			C_{\omega} = \sum_p \frac1{p^2(p+1)} \doteq 0.122017.
		\end{equation*} 
	\end{corollary}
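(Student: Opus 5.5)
The plan is to apply Theorem \ref{Th_h} with $h=\omega$. Everything then reduces to computing $\mu*\omega$ explicitly, identifying which case of the error term applies, and evaluating the constant.

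First I compute $\mu*\omega$. Since $\omega(n)=\sum_{p\mid n}1$, we have $\omega=\1*\chi$, where $\chi$ is the characteristic function of the primes and $\1$ denotes the constant function $1$. Möbius inversion then gives $(\mu*\omega)(n)=\chi(n)$. In other words, $(\mu*\omega)(n)=1$ if $n$ is prime and $0$ otherwise.

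Next I check the hypothesis of Theorem \ref{Th_h}. We have $(\mu*\omega)(n)\ll n^{0}$, so we may take $\beta=0$. To decide which branch of $R_\beta(x)$ applies, note that
\[
\sum_{n=1}^\infty \frac{|(\mu*\omega)(n)|}{n}=\sum_p \frac1p=\infty .
\]
This places us in the second case, and the error term is $O(x(\log x)^3)$, as claimed.

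Finally, the constant from Theorem \ref{Th_h} is
\[
\sum_{n=1}^\infty \frac{(\mu*\omega)(n)}{n^2\psi(n)}=\sum_p \frac{1}{p^2\psi(p)}=\sum_p\frac1{p^2(p+1)},
\]
using $\psi(p)=p+1$. This is $C_\omega$. The numerical value $0.122017$ is a routine computation: sum over small primes and bound the tail by $\sum_{p>P}p^{-3}$. There is no real obstacle here. The only point requiring care is choosing the correct case of $R_\beta$, which is dictated by the divergence of $\sum_p 1/p$.
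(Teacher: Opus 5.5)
Your proposal is correct and follows essentially the same route as the paper: apply Theorem~\ref{Th_h} with $h=\omega$, note that $\mu*\omega$ is the indicator of the primes, and read off the constant $\sum_p 1/(p^2(p+1))$. You also state explicitly what the paper leaves implicit, namely that the divergence of $\sum_p 1/p$ places you in the $\beta=0$ case with error term $O(x(\log x)^3)$.
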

	
	\begin{corollary} {\rm ($h=\Omega$)} \label{Cor_Omega}
		\begin{equation*} 
			\sum_{a,b\le x} \Omega(f(a,b)) = C_{\Omega} x^2  + O\left(x(\log x)^3 \right),
		\end{equation*} 
		where
		\begin{equation*} 
			C_{\Omega} = \sum_p \frac{p}{(p+1)(p^3-1)} \doteq 0.135052.
		\end{equation*} 
	\end{corollary}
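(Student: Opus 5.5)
We apply Theorem \ref{Th_h} with $h=\Omega$. The proof has three steps: compute $\mu*\Omega$, decide which case of the error term applies, and evaluate the series for the main term.

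\emph{Step 1: the convolution $\mu*\Omega$.} Since $\Omega(n)$ counts the prime power divisors $p^k\mid n$ with $k\ge 1$, we can write $\Omega=\1*\Lambda_0$. Here $\Lambda_0(n)=1$ if $n=p^k$ with $k\ge 1$, and $\Lambda_0(n)=0$ otherwise; this is the analogue of the von Mangoldt function without the logarithm. Möbius inversion gives $\mu*\Omega=\Lambda_0$. To confirm, for $n=p^k$ we get $\Omega(p^k)-\Omega(p^{k-1})=1$. For $n$ with at least two distinct prime factors, $\Omega$ is additive, so the sum $\sum_{d\mid n}\mu(d)\Omega(n/d)$ vanishes.

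\emph{Step 2: the error term.} The function $\mu*\Omega$ is bounded, so the hypothesis of Theorem \ref{Th_h} holds with $\beta=0$. Moreover
\[
\sum_{n=1}^\infty \frac{|(\mu*\Omega)(n)|}{n}=\sum_p\sum_{k\ge1}\frac1{p^k}\ge \sum_p\frac1p=\infty .
\]
Hence we are in the second case of Theorem \ref{Th_h}, and the error term is $O(x(\log x)^3)$, as claimed.

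\emph{Step 3: the constant.} We use $\psi(p^k)=p^{k-1}(p+1)$. The main term coefficient is
\[
\sum_{n=1}^\infty\frac{(\mu*\Omega)(n)}{n^2\psi(n)}=\sum_p\sum_{k\ge1}\frac{1}{p^{2k}p^{k-1}(p+1)}
=\sum_p\frac{p}{p+1}\sum_{k\ge1}p^{-3k}=\sum_p\frac{p}{(p+1)(p^3-1)} .
\]
This equals $C_\Omega$. The series converges absolutely, since its terms are $\ll p^{-3}$, which justifies the rearrangement into a double sum over $p$ and $k$.

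None of these steps is a genuine obstacle. The only points needing care are the identification $\mu*\Omega=\Lambda_0$ and the check that $\sum|\mu*\Omega(n)|/n$ diverges. The latter places us in the $(\log x)^3$ case rather than the $(\log x)^2$ case.
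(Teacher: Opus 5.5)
Your proposal is correct and follows the paper's proof: identify $\mu*\Omega$ as the indicator of prime powers, apply Theorem~\ref{Th_h} with $\beta=0$, and sum the geometric series over $k$ to get $C_\Omega$. Your explicit check that $\sum_n |(\mu*\Omega)(n)|/n$ diverges, which places you in the $(\log x)^3$ case, is left implicit in the paper; it is not strictly needed, since $x(\log x)^2 \ll x(\log x)^3$ in any case.
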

	
	For the constants $C_{\omega}$ and $C_{\Omega}$ see sequences A382562 and A397427 of \cite{OEIS}.

	\begin{corollary} {\rm ($h=\sigma$)} \label{Cor_sigma} 
		\begin{equation*} 
			\sum_{a,b\le x} \sigma(f(a,b)) = C_{\sigma} x^2  + O(x^{3/2} \log x),
		\end{equation*} 
		where 
		\begin{equation*}
			C_{\sigma} = \zeta(2)C= \prod_p \left(1+\frac{p}{(p-1)(p+1)^2}\right) \doteq 1.450032,   
		\end{equation*}
		$C$ being defined by \eqref{def_C}. 
	\end{corollary}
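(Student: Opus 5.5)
The plan is to apply Theorem \ref{Th_h} with $h=\sigma$. Since $\sigma=\1*\mathrm{id}$ and $\mu*\1$ is the identity for Dirichlet convolution, we get $(\mu*\sigma)(n)=n$. Hence the hypothesis holds with $\beta=1<2$, and the first case of the remainder estimate gives $R_1(x)\ll x^{3/2}\log x$. This is exactly the claimed error term. The main term is
\[
x^2\sum_{n=1}^{\infty}\frac{n}{n^2\psi(n)}=x^2\sum_{n=1}^{\infty}\frac{1}{n\psi(n)}.
\]

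The next step is to evaluate this constant as an Euler product. The function $n\mapsto 1/(n\psi(n))$ is multiplicative, and at a prime power $p^k$ with $k\ge1$ it equals $1/(p^{2k}(1+1/p))=\frac{p}{p+1}p^{-2k}$. Thus the local factor is
\[
1+\frac{p}{p+1}\sum_{k\ge1}p^{-2k}=1+\frac{p}{(p+1)(p^2-1)}=1+\frac{p}{(p-1)(p+1)^2},
\]
which is the stated product.

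It remains to check that this product equals $\zeta(2)C$. Multiplying the factor of $C$ by $(1-1/p^2)^{-1}$ gives
\[
\frac{p^3+p^2-1}{p^2(p+1)}\cdot\frac{p^2}{(p-1)(p+1)}=\frac{p^3+p^2-1}{(p-1)(p+1)^2}.
\]
Since $(p-1)(p+1)^2=p^3+p^2-p-1$, this equals $1+\frac{p}{(p-1)(p+1)^2}$, as required. Absolute convergence is clear because the factor is $1+O(p^{-2})$.

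There is no real obstacle. The only care needed is the algebra of the local factors and the identification $(\mu*\sigma)(n)=n$. The numerical value $\doteq1.450032$ follows from $\zeta(2)C\approx1.644934\times0.881513$.
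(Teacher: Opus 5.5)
Your proposal is correct and follows the paper's proof exactly. You apply Theorem \ref{Th_h} with $(\mu*\sigma)(n)=n$ (the case $\beta=1$) and evaluate $\sum_{n\ge1}1/(n\psi(n))$ as an Euler product; your explicit verification that these local factors match those of $\zeta(2)C$ is a detail the paper leaves unstated.
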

	
	For the constant $C_{\sigma}$ see \cite{OEIS}, sequence A335762.
	
	\begin{corollary} {\rm ($h=\phi$)} \label{Cor_phi} 
		\begin{equation*}
			\sum_{a,b\le x} \phi(f(a,b)) = C_{\phi} x^2 + O(x^{3/2} \log x),
		\end{equation*} 
		where
		\begin{equation*}
			C_{\phi} = \prod_p \left(1+\frac{p^3-p^2-p-1}{p^3(p+1)^2}\right).   
		\end{equation*}
	\end{corollary}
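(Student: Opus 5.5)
Corollary \ref{Cor_phi} follows from Theorem \ref{Th_h} with $h=\phi$. The work is to identify $\mu*\phi$, check the growth hypothesis with $\beta=1$, and evaluate the Euler product.

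\emph{Step 1: compute $\mu*\phi$.} Since $\phi=\mu*\mathrm{id}$, we get $\mu*\phi=\mu*\mu*\mathrm{id}$. This is multiplicative, with values on prime powers
\[
(\mu*\phi)(p)=p-2, \qquad (\mu*\phi)(p^k)=p^k-2p^{k-1}+p^{k-2} \quad (k\ge 2).
\]
Equivalently, $(\mu*\phi)(p^k)=p^{k-2}(p-1)^2$ for $k\ge 2$. In every case $|(\mu*\phi)(n)|\le n$, so the hypothesis of Theorem \ref{Th_h} holds with $\beta=1$. The theorem then gives the error term $x^{1+1/2}\log x=x^{3/2}\log x$, as claimed.

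\emph{Step 2: evaluate the constant.} The constant is $\sum_n (\mu*\phi)(n)/(n^2\psi(n))$. The summand is multiplicative, and $\psi(p^k)=p^{k-1}(p+1)$, so the $p$-factor is
\[
1+\frac{p-2}{p^2(p+1)}+\sum_{k\ge 2}\frac{p^{k-2}(p-1)^2}{p^{3k-1}(p+1)} .
\]
The geometric tail equals
\[
\frac{(p-1)^2}{p+1}\sum_{k\ge2}p^{-2k-1}=\frac{(p-1)^2}{(p+1)p^5}\cdot\frac{p^4}{p^4-1}\cdot\frac{1}{p^{-4}}\cdot p^{-4}.
\]
More carefully: $\sum_{k\ge 2}p^{-2k-1}=p^{-5}/(1-p^{-2})=1/(p^3(p^2-1))$, so the tail is
\[
\frac{(p-1)^2}{(p+1)p^3(p^2-1)}=\frac{p-1}{p^3(p+1)^2}.
\]
Adding $\frac{p-2}{p^2(p+1)}=\frac{p(p-2)(p+1)}{p^3(p+1)^2}$ to the tail gives a numerator of
\[
p(p^2-p-2)+p-1=p^3-p^2-p-1 .
\]
Hence the local factor is $1+\frac{p^3-p^2-p-1}{p^3(p+1)^2}$, which matches $C_\phi$.

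The only obstacle is this algebraic simplification. The bounds themselves are immediate from Theorem \ref{Th_h}.
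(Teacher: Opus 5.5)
Your proof is correct and follows the paper's route exactly. You apply Theorem~\ref{Th_h} with $\beta=1$, using $(\mu*\phi)(p)=p-2$ and $(\mu*\phi)(p^k)=p^{k-2}(p-1)^2$ for $k\ge 2$, then sum the local Euler factor, and your algebra giving the numerator $p^3-p^2-p-1$ checks out. The only blemish is your first displayed formula for the geometric tail, which is wrong: the factor $p^4/(p^4-1)$ should be $p^2/(p^2-1)$, and the last two factors just cancel. Delete that line and keep the ``more carefully'' computation, which is correct.
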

	
	Formulas for the sums $\sum_{a,b\le x} \sigma(\gcd(a,b))$ and $\sum_{a,b\le x} \phi(\gcd(a,b))$
	are known in the literature. See Cohen \cite{Coh1960, Coh1962}.
	
	\begin{corollary} {\rm ($h=\psi$)} \label{Cor_psi} 
		\begin{equation*} 
			\sum_{a,b\le x} \psi(f(a,b)) = C_{\psi} x^2 + O(x^{3/2} \log x),
		\end{equation*} 
		where
		\begin{equation*}
			C_{\psi} = \prod_p \left(1+\frac{p^2+1}{p^3(p+1)}\right).   
		\end{equation*}
	\end{corollary}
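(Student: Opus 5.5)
We apply Theorem \ref{Th_h} with $h=\psi$ and $\beta=1$. This needs two things: the bound $(\mu*\psi)(n)\ll n$, and the Euler product for the constant $\sum_{n\ge 1}(\mu*\psi)(n)/(n^2\psi(n))$.

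\emph{The bound.} Since $\mu$ and $\psi$ are multiplicative, so is $g:=\mu*\psi$, and $g(p^k)=\psi(p^k)-\psi(p^{k-1})$ for $k\ge 1$. Using $\psi(1)=1$ and $\psi(p^k)=p^{k-1}(p+1)$ for $k\ge 1$, we get
\begin{equation*}
g(p)=(p+1)-1=p, \qquad g(p^k)=p^{k-2}(p+1)(p-1)=p^k-p^{k-2} \quad (k\ge 2).
\end{equation*}
Hence $0\le g(p^k)\le p^k$ for all prime powers. Multiplicativity then gives $0\le g(n)\le n$ for every $n$, so the hypothesis of Theorem \ref{Th_h} holds with $\beta=1$. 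The first case of that theorem yields the remainder $R_1(x)\ll x^{3/2}\log x$, as claimed.

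\emph{The constant.} The function $n\mapsto g(n)/(n^2\psi(n))$ is multiplicative and nonnegative. Since $g(n)/(n^2\psi(n))\le n/n^3=1/n^2$, the series converges absolutely, so we may write it as an Euler product. The local factor at $p$ is
\begin{equation*}
1+\frac{p}{p^2(p+1)}+\sum_{k\ge 2}\frac{p^k-p^{k-2}}{p^{2k}\,p^{k-1}(p+1)}.
\end{equation*}
We simplify the pieces separately.
\begin{itemize}
\item The second term equals $1/(p(p+1))$.
\item For $k\ge 2$, write $p^k-p^{k-2}=p^{k-2}(p-1)(p+1)$. The $k$-th summand then collapses to $(p-1)/p^{2k+1}$.
\item Summing this geometric series gives
\begin{equation*}
\sum_{k\ge 2}\frac{p-1}{p^{2k+1}}=\frac{p-1}{p^5}\cdot\frac{p^2}{p^2-1}=\frac1{p^3(p+1)}.
\end{equation*}
\end{itemize}
Adding up, the local factor is
\begin{equation*}
1+\frac1{p(p+1)}+\frac1{p^3(p+1)}=1+\frac{p^2+1}{p^3(p+1)}.
\end{equation*}
This is exactly the factor in $C_\psi$, which completes the argument.

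There is no real obstacle here. The only point needing care is the formula for $g(p^k)$ when $k\ge 2$, which differs from the case $k=1$. That distinction is also what makes the geometric series telescope so neatly.
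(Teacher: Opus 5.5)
Your proof is correct and follows the paper's argument: you compute $(\mu*\psi)(p)=p$ and $(\mu*\psi)(p^k)=p^k-p^{k-2}$ for $k\ge 2$, apply Theorem~\ref{Th_h} with $\beta=1$, and evaluate the Euler product, and all of your local-factor computations check out. One small wording point: the series over $k\ge 2$ is geometric, not telescoping.
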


	\section{Proofs} \label{Section_Proofs}
	
	We will only use familiar elementary estimates.    
	The following two lemmas are known and can easily be deduced from the familiar properties $\sum_{d\mid n} \mu(d)= \lfloor 1/n \rfloor$ and $\phi(n)= n\sum_{d\mid n} \mu(d)/d$ ($n\in \N$).
	
	\begin{lemma} \label{Lemma_1} Let $a$ be an integer and $m,q$ be positive integers such that $\gcd(m,q) =1$.
		Then 
		\begin{equation*}
			\sum_{\substack{n\le x\\ \gcd(n,m)=1 \\ n \equiv a \textup{ (mod $q$)}}} 1= \frac{\phi(m)x}{mq} + O(2^{\omega(m)}),
		\end{equation*}
		uniformly for $a,m,q$.
	\end{lemma}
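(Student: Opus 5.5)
We will prove Lemma \ref{Lemma_1} by sieving out the condition $\gcd(n,m)=1$ with the identity $\sum_{d\mid k}\mu(d)=\lfloor 1/k\rfloor$, which turns the sum into a combination of counts of integers in arithmetic progressions. Writing the indicator of $\gcd(n,m)=1$ as $\sum_{d\mid \gcd(n,m)}\mu(d)$, we get
\begin{equation*}
\sum_{\substack{n\le x\\ \gcd(n,m)=1 \\ n \equiv a \textup{ (mod $q$)}}} 1
= \sum_{d\mid m} \mu(d) \sum_{\substack{n\le x\\ d\mid n \\ n \equiv a \textup{ (mod $q$)}}} 1 .
\end{equation*}
Only squarefree $d$ contribute, so the outer sum has $2^{\omega(m)}$ nonzero terms. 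This is where the error term $O(2^{\omega(m)})$ will come from.

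Next we handle the inner sum. Since $d\mid m$ and $\gcd(m,q)=1$, we have $\gcd(d,q)=1$. By the Chinese remainder theorem, the two conditions $n\equiv 0 \pmod d$ and $n\equiv a \pmod q$ combine into a single congruence $n\equiv c_d \pmod{dq}$ for some residue $c_d$. The number of $n\le x$ in one residue class modulo $dq$ is $x/(dq)+O(1)$, with an absolute constant in the $O(1)$ that does not depend on $a$, $d$ or $q$. Substituting gives
\begin{equation*}
\frac{x}{q}\sum_{d\mid m}\frac{\mu(d)}{d} + O\Bigl(\sum_{d\mid m}\mu^2(d)\Bigr) = \frac{\phi(m)x}{mq}+O(2^{\omega(m)}),
\end{equation*}
using $\phi(m)=m\sum_{d\mid m}\mu(d)/d$ and $\sum_{d\mid m}\mu^2(d)=2^{\omega(m)}$.

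There is no real obstacle here. The one point needing care is uniformity: the implied constant must be absolute. This holds because each residue-class count has error at most $1$, whatever $a$, $d$ and $q$ are, and no other error arises. The hypothesis $\gcd(m,q)=1$ is used exactly once, to apply the Chinese remainder theorem so that the combined modulus is $dq$ and not $\operatorname{lcm}(d,q)$. Without it the system could even be unsolvable when $\gcd(d,q)\nmid a$.
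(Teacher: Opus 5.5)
Your proof is correct and follows the route the paper indicates. The paper only remarks that the lemma follows from $\sum_{d\mid n}\mu(d)=\lfloor 1/n\rfloor$ and $\phi(n)=n\sum_{d\mid n}\mu(d)/d$, and your argument spells this out: the M\"obius sieve over $d\mid m$, the Chinese remainder theorem with $\gcd(d,q)=1$, and the count of a residue class modulo $dq$, which is $x/(dq)$ with error at most $1$ uniformly in the class.
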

	
	\begin{lemma} \label{Lemma_2} Let $m$ be a positive integer. Then 
		\begin{equation*}
			\sum_{\substack{n\le x \\  \gcd(n,m)=1}} \frac{\phi(n)}{n} = \frac{mx}{\zeta(2)\psi(m)} + O(2^{\omega(m)}\log x),
		\end{equation*}
		uniformly for $m$.
	\end{lemma}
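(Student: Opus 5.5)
We prove Lemma \ref{Lemma_2} by writing $\phi(n)/n=\sum_{d\mid n}\mu(d)/d$ and interchanging summations. The coprimality condition $\gcd(n,m)=1$ forces $\gcd(d,m)=1$. Writing $n=dk$ with $\gcd(k,m)=1$, we get
\begin{equation*}
\sum_{\substack{n\le x\\ \gcd(n,m)=1}} \frac{\phi(n)}{n} = \sum_{\substack{d\le x\\ \gcd(d,m)=1}} \frac{\mu(d)}{d} \sum_{\substack{k\le x/d\\ \gcd(k,m)=1}} 1 .
\end{equation*}

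The inner sum is evaluated by Lemma \ref{Lemma_1} with $q=1$, giving $\phi(m)x/(md)+O(2^{\omega(m)})$. This produces the main term
\begin{equation*}
\frac{\phi(m)x}{m}\sum_{\substack{d\le x\\ \gcd(d,m)=1}} \frac{\mu(d)}{d^2}
\end{equation*}
and an error of size $2^{\omega(m)}\sum_{d\le x}1/d\ll 2^{\omega(m)}\log x$. The $\log x$ in the claimed bound comes exactly from here.

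Next we complete the $d$-sum to infinity, at a cost of $\frac{\phi(m)x}{m}\sum_{d>x}d^{-2}\ll 1$, which is absorbed into the error. The complete series has the Euler product
\begin{equation*}
\sum_{\gcd(d,m)=1}\frac{\mu(d)}{d^2}=\prod_{p\nmid m}\Bigl(1-\frac1{p^2}\Bigr)=\frac{1}{\zeta(2)}\prod_{p\mid m}\Bigl(1-\frac1{p^2}\Bigr)^{-1}.
\end{equation*}
Multiplying by $\phi(m)/m=\prod_{p\mid m}(1-1/p)$ gives $\frac{1}{\zeta(2)}\prod_{p\mid m}(1+1/p)^{-1}=\frac{m}{\zeta(2)\psi(m)}$, which is the stated main term.

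All constants in these estimates are absolute, so the result holds uniformly in $m$. There is no real obstacle. The only point to check is that Lemma \ref{Lemma_1}'s error $O(2^{\omega(m)})$ is uniform in $x/d$, including when $x/d<1$, where the count is $0$ and the estimate holds trivially.
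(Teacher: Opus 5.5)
Your proof is correct and is essentially what the paper intends. The paper gives no details beyond saying the lemma follows from $\phi(n)=n\sum_{d\mid n}\mu(d)/d$ and $\sum_{d\mid n}\mu(d)=\lfloor 1/n\rfloor$, and your argument carries this out exactly: you interchange the summations, apply Lemma~\ref{Lemma_1} with $q=1$, and evaluate $\sum_{\gcd(d,m)=1}\mu(d)/d^2$ as an Euler product. The only caveat, which the statement itself shares, is that bounding $\sum_{d\le x}1/d$ and the $O(1)$ tail by $\log x$ requires $x\ge 2$, or $\log x$ read as $\log 3x$; for $1<x<2$ the left side equals $1$ while $\log x\to 0$, and the paper adopts this convention implicitly when it later writes $\log(3x/(ds))$.
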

	
	Now we prove the key lemma of our treatment.
	
	\begin{lemma} \label{Lemma_V} Let $d$ be a positive integer. Then 
		\begin{equation*}
			V_d(x):= \sum_{\substack{i,j\le x \\  \gcd(i,j)=1\\ d\mid (i+j)}} 1 = \frac{x^2}{\zeta(2)\psi(d)} + O(x\log x),
		\end{equation*}
		uniformly for $d\in \N$.
	\end{lemma}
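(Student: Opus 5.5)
We plan to prove Lemma \ref{Lemma_V} by fixing $i$ and counting $j$, then summing over $i$. First we dispose of a trivial range: if $d>2x$ then $V_d(x)=0$, while $x^2/\psi(d)\le x^2/d<x$. So the claim holds there, and we may assume $d\le 2x$. Also, if $\gcd(i,j)=1$ and $d\mid i+j$, then $\gcd(i,d)=1$, since a common prime of $i$ and $d$ would divide $j$. Hence
\begin{equation*}
V_d(x)=\sum_{\substack{i\le x\\ \gcd(i,d)=1}} \#\{j\le x:\ \gcd(j,i)=1,\ j\equiv -i \ (\text{mod } d)\}.
\end{equation*}
For fixed $i$ with $\gcd(i,d)=1$, we apply Lemma \ref{Lemma_1} with $m=i$, $q=d$, $a=-i$; the hypothesis $\gcd(m,q)=1$ holds. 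This gives the inner count as $\phi(i)x/(id)+O(2^{\omega(i)})$.

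Summing over $i$, the main term becomes
\begin{equation*}
\frac{x}{d}\sum_{\substack{i\le x\\ \gcd(i,d)=1}}\frac{\phi(i)}{i}.
\end{equation*}
By Lemma \ref{Lemma_2} with $m=d$, this equals
\begin{equation*}
\frac{x}{d}\Big(\frac{dx}{\zeta(2)\psi(d)}+O(2^{\omega(d)}\log x)\Big)=\frac{x^2}{\zeta(2)\psi(d)}+O\Big(\frac{x2^{\omega(d)}\log x}{d}\Big).
\end{equation*}
Since $2^{\omega(d)}\le\tau(d)\ll d$ (indeed $2^{\omega(d)}\le d$), this error is $O(x\log x)$ uniformly in $d$. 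The accumulated error from Lemma \ref{Lemma_1} is $\sum_{i\le x}2^{\omega(i)}\ll x\log x$, by the standard bound $\sum_{n\le x}2^{\omega(n)}\le\sum_{n\le x}\tau(n)\ll x\log x$.

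The main obstacle is keeping uniformity in $d$. The error in Lemma \ref{Lemma_1} must depend only on $m=i$ and not on $q=d$, and the factor $1/d$ must absorb $2^{\omega(d)}$ in the Lemma \ref{Lemma_2} error; both work as stated. One subtlety is that Lemma \ref{Lemma_1} counts $j\le x$ in a residue class even when $d>x$. The $O(2^{\omega(i)})$ error, with its constant independent of $q$, already handles this, and our restriction $d\le 2x$ is only needed for the trivial case. So no additional obstacle arises, and combining the two error terms gives $V_d(x)=x^2/(\zeta(2)\psi(d))+O(x\log x)$ uniformly.
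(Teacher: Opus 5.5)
Your proof is correct and is essentially the paper's argument: the paper fixes $j$ and counts $i\equiv -j \pmod d$ coprime to $j$ via Lemma~\ref{Lemma_1}, then sums $\phi(j)/j$ over $j$ coprime to $d$ via Lemma~\ref{Lemma_2}, absorbing the errors with $\sum_{j\le x}2^{\omega(j)}\ll x\log x$ and $2^{\omega(d)}/d\le 1$. You do the same with the roles of $i$ and $j$ swapped, and your preliminary reduction to $d\le 2x$ is harmless but, as you note yourself, not needed.
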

	
	\begin{proof} If $ d\mid (i+j)$ and $\gcd(i,j)=1$, then $\gcd(i,d)=\gcd(j,d)=1$.  Using Lemmas \ref{Lemma_1} and \ref{Lemma_2}, 
		\[
		V_d(x)= \sum_{\substack{j\le x \\  \gcd(j,d)=1}} 1 \sum_{\substack{i\le x\\ \gcd(i,j)=1\\ i\equiv -j \text{ (mod $d$)}}} 1 
		\]
		\[
		= \sum_{\substack{j\le x \\  \gcd(j,d)=1}} \left(\frac{\phi(j)x}{jd} + O(2^{\omega(j)}) \right) =
		\frac{x}{d} \sum_{\substack{j\le x \\  \gcd(j,d)=1}} \frac{\phi(j)}{j} +  O\left(\sum_{j\le x} 2^{\omega(j)} \right) 
		\]
		\[
		= \frac{x}{d} \left( \frac{dx}{\zeta(2)\psi(d)}  +  O\left(2^{\omega(d)}\log x \right)\right) + O(x\log x)
		\]
		\[
		= \frac{x^2}{\zeta(2)\psi(d)}  +  O\left(\frac{2^{\omega(d)}}{d} x \log x \right) + O(x\log x)
		\]
		\[
		= \frac{x^2}{\zeta(2)\psi(d)}  +  O(x\log x),
		\]
		where $2^{\omega(d)}/d\le 1$ for every $d\in \N$.
	\end{proof}
	
	\begin{lemma} \label{Lemma_h} Let $h$ be an arbitrary arithmetic function. Then 
		\begin{equation*}
			S(x):= \sum_{a,b\le x} h(f(a,b)) = x^2 \sum_{d\le (2x)^{1/2}} \frac{(\mu*h)(d)}{d^2\psi(d)} + S_1(x) +S_2(x),
		\end{equation*}
		where
		\begin{align*}
			S_1(x) & \ll x \sum_{d\le (2x)^{1/2}} \frac{|(\mu*h)(d)|}{\psi(d)}, \\
			S_2(x) & \ll x \sum_{d^2s\le 2x} \frac{|(\mu*h)(d)|}{ds}  \log \frac{3x}{ds}.
		\end{align*}
	\end{lemma}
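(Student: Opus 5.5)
We write $h=\1*(\mu*h)$, i.e. $h(n)=\sum_{d\mid n}(\mu*h)(d)$. Using \eqref{form_f}, put $a=\delta i$, $b=\delta j$ with $\gcd(i,j)=1$, so that $f(a,b)=\gcd(\delta,i+j)$. Then
\[
S(x)=\sum_{\delta\le x}\sum_{\substack{i,j\le x/\delta\\ \gcd(i,j)=1}} \sum_{\substack{d\mid \delta\\ d\mid (i+j)}} (\mu*h)(d)
=\sum_{d}(\mu*h)(d)\sum_{s\le x/d} V_d\!\left(\frac{x}{ds}\right),
\]
where we wrote $\delta=ds$ and used the notation $V_d$ of Lemma \ref{Lemma_V}.

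The inner count $V_d(y)$ vanishes unless $d\le 2y$, because $i+j\le 2y$ and $d\mid i+j$. With $y=x/(ds)$ this requires $d\le 2x/(ds)$, i.e. $d^2s\le 2x$. In particular only $d\le (2x)^{1/2}$ occur.

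Next insert Lemma \ref{Lemma_V} with $y=x/(ds)$:
\[
V_d\!\left(\frac{x}{ds}\right)=\frac{x^2}{\zeta(2)\psi(d)d^2s^2}+O\!\left(\frac{x}{ds}\log\frac{3x}{ds}\right).
\]
We use $\log(3y)$ rather than $\log y$ so that the bound stays valid when $y$ is close to $1$; for $y\ge 1/2$ this is harmless, since $V_d(y)\ll y^2+1$ there anyway. Summing the $O$-term over $d^2s\le 2x$ with weight $|(\mu*h)(d)|$ produces exactly $S_2(x)$.

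For the main term, sum over $s\le 2x/d^2$:
\[
\sum_{s\le 2x/d^2}\frac1{s^2}=\zeta(2)+O\!\left(\frac{d^2}{x}\right).
\]
The $\zeta(2)$ cancels, leaving $x^2\sum_{d\le(2x)^{1/2}}(\mu*h)(d)/(d^2\psi(d))$. The tail error contributes
\[
x^2\sum_{d}\frac{|(\mu*h)(d)|}{d^2\psi(d)}\cdot\frac{d^2}{x}=x\sum_{d\le(2x)^{1/2}}\frac{|(\mu*h)(d)|}{\psi(d)},
\]
which is $S_1(x)$.

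The only delicate point is the range of summation: we must truncate at $d^2s\le 2x$ rather than $ds\le x$ before inserting the asymptotic formula. Otherwise, the main terms from pairs $(d,s)$ with $V_d=0$ would have to be absorbed separately. Restricting first to $d^2s\le 2x$, where the sum is exact, avoids this issue. We also need the uniformity in $d$ of Lemma \ref{Lemma_V}, which that lemma provides.
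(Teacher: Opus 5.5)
Your proposal is correct and follows essentially the same route as the paper. You expand $h(n)=\sum_{d\mid n}(\mu*h)(d)$, write $\delta=ds$, reduce to $V_d(x/(ds))$ over $d^2s\le 2x$, insert Lemma~\ref{Lemma_V} with error $O\bigl(\frac{x}{ds}\log\frac{3x}{ds}\bigr)$, and use $\sum_{s\le 2x/d^2}s^{-2}=\zeta(2)+O(d^2/x)$ to produce $S_1$ and $S_2$. Your remark that $\log(3y)$ is needed because $y=x/(ds)$ can be as small as $1/2$ makes explicit a point the paper handles only implicitly.
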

	
	\begin{proof}[Proof of Lemma {\rm \ref{Lemma_h}}]
		We have, according to \eqref{form_f} and using that $h(n)=\sum_{d\mid n} (\mu*h)(d)$ ($n\in \N$),
		\begin{equation*}
			S(x) = \sum_{\substack{\delta i, \delta j\le x \\ \gcd(i,j)=1}} h(\gcd(\delta,i+j)) =
			\sum_{\substack{\delta i, \delta j\le x \\ \gcd(i,j)=1}} \sum_{d\mid \gcd(\delta,i+j)} (\mu*h)(d) 
		\end{equation*}
		\begin{equation*}
			= \sum_{\substack{\delta i, \delta j\le x \\ \gcd(i,j)=1}} \sum_{\substack{d\mid \delta\\ d\mid (i+j)}} (\mu*h)(d)
			= \sum_{\substack{dsi, dsj\le x \\ \gcd(i,j)=1\\  d\mid (i+j)}} (\mu*h)(d).
		\end{equation*}
		
		Here $d\mid i+j$ and $i,j\le x/(ds)$ imply that $d\le 2\max(i,j)\le 2x/(ds)$, that is, $d^2s\le 2x$. We obtain 
		\begin{equation*}
			S(x)= \sum_{d\le (2x)^{1/2}} (\mu*h)(d) \sum_{s\le 2x/d^2}  \sum_{\substack{i, j\le x/(ds)\\ \gcd(i,j)=1\\ d\mid (i+j)}} 1
		\end{equation*}
		\begin{equation*}
			=\sum_{d\le (2x)^{1/2}} (\mu*h)(d) \sum_{s\le 2x/d^2} V_d(x/(ds)).
		\end{equation*}
		
		Now using Lemma \ref{Lemma_V} we deduce for $x>1$,
		\begin{equation*}
			S(x)=  \sum_{d\le (2x)^{1/2}} (\mu*h)(d) \sum_{s\le 2x/d^2} \left( \frac{x^2}{\zeta(2)d^2s^2\psi(d)}+ O\left( \frac{x}{ds}\log \frac{3x}{ds}\right) \right)
		\end{equation*}
		\begin{equation*}
			=  \frac{x^2}{\zeta(2)} \sum_{d\le (2x)^{1/2}} \frac{(\mu*h)(d)}{d^2\psi(d)} \sum_{s\le 2x/d^2} \frac1{s^2} 
			+ \sum_{d\le (2x)^{1/2}} (\mu*h)(d) \sum_{s\le 2x/d^2} O\left(\frac{x}{ds}\log \frac{3x}{ds}\right),
		\end{equation*}
		where we have $sd\le sd^2\le 2x < 3x$, hence $3x/(ds)>1$. Since $\sum_{s\le 2x/d^2} \frac1{s^2}= \zeta(2) + O(d^2/x)$, we obtain 
		the given estimate, which is valid for every function $h$.
	\end{proof}
	
	\begin{proof}[Proof of Theorem {\rm \ref{Th_h}}]
		Assume that $(\mu*h)(n)\ll n^{\beta}$, where $\beta<2$. Then by Lemma \ref{Lemma_h} we obtain
		\begin{equation*}
			S(x) =  x^2 \sum_{d=1}^{\infty} \frac{(\mu*h)(d)}{d^2\psi(d)} + T(x)+ S_1(x) +S_2(x),
		\end{equation*}
		say, where the series is absolutely convergent and  
		\begin{equation*}
			T(x): = - x^2 \sum_{d>(2x)^{1/2}} \frac{(\mu*h)(d)}{d^2\psi(d)} \ll x^2 \sum_{d>(2x)^{1/2}} \frac{|(\mu*h)(d)|}{d^2\psi(d)} \ll 
			x^2 \sum_{d>(2x)^{1/2}} \frac1{d^{3-\beta}} \ll x^{1+\beta/2}
		\end{equation*}
		for every $\beta<2$. Also,
		\begin{equation*}
			S_1(x)\ll x \sum_{d\le (2x)^{1/2}} \frac{|(\mu*h)(d)|}{\psi(d)} \ll x \sum_{d\le (2x)^{1/2}} \frac1{d^{1-\beta}}, 
		\end{equation*}
		giving $S_1(x)\ll x^{1+\beta/2}$ if $0<\beta<2$, $S_1(x)\ll x\log x$ if $\beta=0$ and $S_1(x)\ll x$ if $\beta<0$. Furthermore,
		\begin{equation*}
			S_2(x)\ll x \sum_{d^2s\le 2x} \frac{|(\mu*h)(d)|}{ds} \log \frac{3x}{ds} \ll 
			x (\log x) \sum_{s\le 2x} \frac1{s} \sum_{d\le (2x/s)^{1/2}} \frac1{d^{1-\beta}}.
		\end{equation*}
		
		If $0<\beta<2$, then $-1<1-\beta <1$ and we deduce
		\begin{equation*}
			S_2(x) \ll x (\log x) \sum_{s\le 2x} \frac1{s} \left(\frac{x}{s}\right)^{\beta/2} \ll 
			x^{1+\beta/2}(\log x) \sum_{s\le 2x} \frac1{s^{1+\beta/2}}\ll x^{1+\beta/2} \log x.
		\end{equation*}
		
		If $\beta=0$, then
		\begin{equation*}
			S_2(x) \ll x (\log x) \sum_{s\le 2x} \frac1{s} \sum_{d\le (2x/s)^{1/2}} \frac1{d}\ll 
			x(\log x)^3.
		\end{equation*}
		
		If $\beta=0$ and the series  $\sum_{n=1}^{\infty} (\mu*h)(n)/n$ is absolutely convergent, then 
		\begin{equation*}
			S_2(x)\ll x (\log x) \sum_{s\le 2x} \frac1{s} \sum_{d\le (2x/s)^{1/2}} \frac{|(\mu*h)(d)|}{d}
			\ll x(\log x) \sum_{s\le 2x} \frac1{s}\ll x(\log x)^2.
		\end{equation*}
		
		Finally, if $\beta<0$, then $1-\beta>1$ and we have    
		\begin{equation*}
			S_2(x) \ll x(\log x) \sum_{s\le 2x} \frac1{s} \ll x(\log x)^2,
		\end{equation*}
		completing the proof.
	\end{proof}
	
	\begin{proof}[Proof of Corollary {\rm \ref{Cor_m}}] Apply Theorem \ref{Th_h} for the function $h=h_m$, where
		$h_m(m)=1$, $h_m(n)=0$ for $n\ne m$. Then the function
		\begin{equation*}
			(\mu*h_m)(n)= \sum_{d\mid n} h_m(d)\mu(n/d) = \begin{cases} \mu(n/m), & \text{ if $m\mid n$,}\\ 0, & \text{ otherwise,}
			\end{cases}
		\end{equation*}
		is bounded (case $\beta=0$), and the constant of the main term is
		\begin{equation*}
			C_m:= \sum_{n=1}^{\infty} \frac{(\mu*h_m)(n)}{n^2\psi(n)}= \sum_{\substack{n=1\\ m\mid n}}^{\infty} \frac{\mu(n/m)}{n^2\psi(n)}
			= \sum_{k=1}^{\infty} \frac{\mu(k)}{k^2m^2\psi(km)},
		\end{equation*}
		where $\psi(km)=\psi(k)\psi(m)\gcd(k,m)/\psi(\gcd(k,m))$ ($k,m\in \N$), 
		an identity analogous to the corresponding identity for Euler's totient function. Also, note that if a function $F$ is 
		multiplicative, then for fixed $m$ the function $k\mapsto F(\gcd(k,m))$ is also multiplicative. We deduce by expanding into an Euler product that 
		\begin{equation*}
			C_m= \frac1{m^2\psi(m)} \sum_{k=1}^{\infty} \frac{\mu(k)\psi(\gcd(k,m))}{k^2\psi(k)\gcd(k,m)}
			=\frac1{m^2\psi(m)}  \prod_p \left(1+\frac{\mu(p)\psi(\gcd(p,m))}{p^2\psi(p)\gcd(p,m)} \right)
		\end{equation*}
		\begin{equation*}
			=\frac1{m^2\psi(m)} \prod_{p\, \nmid \, m} \left(1- \frac1{p^2(p+1)} \right) \prod_{p\mid m} \left(1- \frac1{p^3}\right)
		\end{equation*}
		\begin{equation*}
			=\frac{C}{m^2\psi(m)} \prod_{p\mid m} \left(1- \frac1{p^3}\right) \left(1- \frac1{p^2(p+1)} \right)^{-1} = \frac{C}{m^3} \prod_{p\mid m} \left(1-\frac{p^2}{p^3+p^2-1}\right),  
		\end{equation*}
		where $C$ is the constant defined by \eqref{def_C}.
	\end{proof}
	
	\begin{proof}[Proof of Corollary {\rm \ref{Cor_n_beta}}]
		In the case $h(n)=n^t$ we have $(\mu*h)(n)= n^t \prod_{p\mid n} \left(1-\frac1{p^t}\right)=\phi_t(n)$, the generalized Euler function, which is multiplicative. Here
		$0< \phi_t(n)\le n^t$ ($n\in\N$) if $0<t<2$, and $|\phi_t(n)|\le 1$ ($n\in\N$) if $t<0$. Hence Theorem \ref{Th_h}
		applies with the error term corresponding to $\beta=t$ if $0<t<2$ and $\beta=0$ in the case $t<0$.
		The constant of the main term is
		\begin{equation*}
			\sum_{n=1}^{\infty} \frac{\phi_t(n)}{n^2\psi(n)}= \prod_p \left(1+ \sum_{a=1}^{\infty} \frac{\phi_t(p^a)}{p^{2a}\psi(p^a)}\right)= 
			\prod_p \left(1+ \frac{p(p^t-1)}{(p+1)(p^3-p^t)}\right). \qedhere
		\end{equation*}
	\end{proof}
	
	\begin{proof}[Proof of Corollary {\rm \ref{Cor_1_per_n}}]
		Apply Corollary \ref{Cor_n_beta} in the case $t=-1$.
	\end{proof}
	
	\begin{proof}[Proof of Corollary {\rm \ref{Cor_mu_2}}]
		Apply Theorem \ref{Th_h} for the function $h=\mu^2$, where $g=\mu*\mu^2$ is multiplicative, 
		$g(p^2)=-1$ and $g(p^a)=0$ for $a\in \N\setminus \{2\}$ (case $\beta=0)$. Also, the series
		\begin{equation*}
			\sum_{n=1}^{\infty} \frac{|g(n)|}{n}= \prod_p \left(1+\frac1{p^2}\right) 
		\end{equation*}
		converges. The constant of the main term is
		\begin{equation*}
			\sum_{n=1}^{\infty} \frac{g(n)}{n^2\psi(n)}= \prod_p \left(1-\frac1{p^5(p+1)}\right). \qedhere
		\end{equation*}
	\end{proof}
	
	\begin{proof}[Proof of Corollary {\rm \ref{Cor_mu}}]
		In the case of $h=\mu$ the function $F=\mu*\mu$ is multiplicative, 
		$F(p)=-2$, $F(p^2)=1$ and $F(p^a)=0$ for $a\ge 3$. Hence $|F(n)|\le 2^{\omega(n)}\ll n^{\varepsilon}$.  
		The constant of the main term is
		\begin{equation*}
			\sum_{n=1}^{\infty} \frac{F(n)}{n^2\psi(n)}= \prod_p \left(1-\frac{2p^3-1}{p^5(p+1)}\right). \qedhere
		\end{equation*}
	\end{proof}
	
	\begin{proof}[Proof of Corollary {\rm \ref{Cor_tau}}] Apply Theorem \ref{Th_h} for the function $h=\tau$, where
		$(\mu*\tau)(n)=1$ ($n\in \N$) is bounded. The related constant is
		\begin{equation*}
			\sum_{n=1}^{\infty} \frac1{n^2\psi(n)}=  \prod_p \left(1+\frac{p}{(p+1)(p^3-1)}\right). \qedhere
		\end{equation*}
	\end{proof}
	
	\begin{proof}[Proof of Corollary {\rm \ref{Cor_alpha}}] Here for
		$G=\mu*\alpha$ we have $G(p)=0$ and $G(p^a)=1$ for $a\ge 2$ (case $\beta=0$, again).  Also, the series
		\begin{equation*}
			\sum_{n=1}^{\infty} \frac{|G(n)|}{n}= \prod_p \left(1+\sum_{a=2}^{\infty} \frac1{p^a}\right)= \prod_p \left(1+\frac1{p(p-1)}\right) 
		\end{equation*}
		converges. The related constant is
		\begin{equation*}
			\sum_{n=1}^{\infty} \frac{G(n)}{n^2\psi(n)}=  \prod_p \left(1+\frac1{p^2(p+1)(p^3-1)}\right). \qedhere
		\end{equation*}
	\end{proof}
	
	\begin{proof}[Proof of Corollary {\rm \ref{Cor_omega}}] Apply Theorem \ref{Th_h} for the function $h=\omega$.
		Since $\omega(n)=\sum_{p\mid n} 1$ we deduce that $(\mu*\omega)(n)$ equals $1$ if $n$ is a prime and $0$ otherwise. 
		The related constant is
		\begin{equation*}
			\sum_{n=1}^{\infty} \frac{(\mu*\omega)(n)}{n^2\psi(n)}= \sum_p \frac1{p^2(p+1)}. \qedhere 
		\end{equation*}
	\end{proof}
	
	\begin{proof}[Proof of Corollary {\rm \ref{Cor_Omega}}] If $h=\Omega$, then
		$(\mu*\Omega)(n)$ equals $1$ if $n$ is a prime power and $0$ otherwise. The 
		corresponding constant is
		\begin{equation*}
			\sum_{n=1}^{\infty} \frac{(\mu*\Omega)(n)}{n^2\psi(n)}= \sum_p \sum_{a=1}^{\infty} \frac1{p^{2a}\psi(p^a)}=
			\sum_p \frac{p}{(p+1)(p^3-1)}. \qedhere
		\end{equation*}
	\end{proof}
	
	\begin{proof}[Proof of Corollary {\rm \ref{Cor_sigma}}] Here $h=\sigma$ and 
		$(\mu*\sigma)(n)=n$ for every $n\in \N$ (case $\beta=1$).  The 
		corresponding constant is
		\begin{equation*}
			\sum_{n=1}^{\infty} \frac1{n\psi(n)}= \prod_p \left(1+\frac{p}{(p-1)(p+1)^2} \right). \qedhere
		\end{equation*}
	\end{proof}
	
	\begin{proof}[Proof of Corollary {\rm \ref{Cor_phi}}] If $h=\phi$, then
		$(\mu*\phi)(p)=p-2$ for every prime $p$ and $(\mu*\phi)(p^a)= p^a(1-1/p)^2$ for every prime power $p^a$ with $a\ge 2$. 
		Also,
		\begin{equation*}
			\sum_{n=1}^{\infty} \frac{(\mu*\phi)(n)}{n^2\psi(n)}= \prod_p \left(1+\frac{p^3-p^2-p-1}{p^3(p+1)^2}\right).  \qedhere
		\end{equation*}
	\end{proof}
	
	\begin{proof}[Proof of Corollary {\rm \ref{Cor_psi}}] If $h=\psi$, then
		$(\mu*\psi)(p)=p$ for every prime $p$ and $(\mu*\psi)(p^a)= p^a(1-1/p^2)$ for every prime power $p^a$ with $a\ge 2$. 
		Here
		\begin{equation*}
			\sum_{n=1}^{\infty} \frac{(\mu*\psi)(n)}{n^2\psi(n)}= \prod_p \left(1+\frac{p^2+1}{p^3(p+1)}\right).  \qedhere
		\end{equation*}
	\end{proof}
	
	\section{Further remarks} \label{Section_Further_remarks}

	1. A nonzero arithmetic function of two variables $g:\N^2\to \C$ is called multiplicative if 
	\begin{equation*}
		g(m_1n_1,m_2n_2) = g(m_1,m_2) g(n_1,n_2)
	\end{equation*}
	holds for all $m_1,m_2,n_1,n_2\in \N$ such that $\gcd(m_1m_2,n_1n_2)=1$.
	If $g$ is multiplicative, then it is determined by the values
	$g(p^{\nu_1},p^{\nu_2})$, where $p$ is prime and
	$\nu_1,\nu_2\in \N \cup \{0\}$. More precisely, $g(1,1)=1$ and
	for all $n_1,n_2\in \N$,
	\begin{align*} 
		g(n_1,n_2)= \prod_p g(p^{\nu_p(n_1)},p^{\nu_p(n_2)}),
	\end{align*}
	where we use the notation $n=\prod_p p^{\nu_p(n)}$ for the prime power 
	factorization of $n\in \N$. See the survey \cite{Tot2014} for details on the concept
	of multiplicative functions of several variables.
	As mentioned in the Introduction,
	the gcd and lcm functions are multiplicative, but the function $f$ investigated in this paper is not multiplicative. 
	For example, $f(4,6)f(5,5)= 1\ne 5= f(20,30)$. 
	
	2. Another representation of the function $f$, communicated by Ramar\'e \cite{R}, is given by
	\begin{equation*}
		f(a,b)= \gcd(\lambda,a'+b'),
	\end{equation*}
	where $a=\lambda a'$, $b=\lambda b'$ and $\lambda = \gcud(a,b)$ is the greatest common unitary divisor of $a$ and $b$. We recall that a divisor $d$ of $n$ is a unitary divisor (block divisor) if $\gcd(d,n/d)=1$. Here $\gcud(a,b) \mid \gcd(a,b)$. It turns out that $f(a,b)\mid \gcud(a,b)$ for every $a,b\in \N$. 
	
	It is known, see the second author \cite[Th.\ 2.1]{Tot2001}, that 
	\begin{equation} \label{gcdu_1}
		\sum_{\substack{a,b\le x \\ \gcud(a,b)=1}} 1 = D x^2 + O\left(x(\log x)^2 \right), 
	\end{equation}
	where 
	\begin{equation*}
		D=\prod_p \left(1-\frac{p-1}{p^2(p+1)} \right)\doteq 0.807330,
	\end{equation*}
	see sequence A306071 in \cite{OEIS}. Compare formula \eqref{gcdu_1} to \eqref{asympt_gcd}. Also, 
	\begin{equation} \label{sum_gcud}
		\sum_{a,b\le x} \gcud(a,b) = Fx^2\log x +O(x^2),    
	\end{equation}
	where 
	\begin{equation*}
		F=\zeta(2) \prod_p \left(1- \frac{(2p-1)^2}{p^4} \right),
	\end{equation*}
	see Haukkanen and the second author \cite[Th.\ 6.4]{HauTot2018}. Compare formula \eqref{sum_gcud}
	to \eqref{gcd_a_b} and \eqref{asympt_f}.
	
	3. By \eqref{form_f} it is easy to see that given $m\in \N$, we have $f(a,b)=m$ if and only if
	$a=mki$, $b=mkj$, where $i,j,k,\ell\ge 1$ with $i+j=m\ell$, $\gcd(i,j)=1$, $\gcd(k,\ell)=1$.
	
	4. It would be interesting to study further asymptotic properties of the function $f$. For example, deduce asymptotic formulas for the sums $\sum_{a,b\le x} h(f(a,b))$, where $(\mu*h)(n)\ll n^{\beta}$ with $\beta \ge 2$, the case not covered by Theorem \ref{Th_h}. In particular, 
	deduce formulas for $\sum_{a,b\le x} (f(a,b))^t$, where $t\ge 2$.
	
	5. Consider the function 
	\begin{equation} \label{def_F}
		F(a,b)= \frac{\lcm(ab,a+b)}{\lcm(a,b)},
	\end{equation}
	which is the lcm analogue of the function $f$, defined by \eqref{def_f}. For every $a,b\in \N$ one has 
	\begin{equation} \label{form_F}
		F(a,b)= \frac{a+b}{f(a,b)} = \lcm(\delta,i+j),
	\end{equation}
	with the notation $a=\delta i$, $b=\delta j$, $\gcd(i,j)=1$. Hence $F$ is also integer-valued, and $\gcd(a,b)\mid F(a,b)$ for every $a,b\in \N$. Also,
	\begin{equation*}
		F(a,b)= \lcm(\lambda,a'+b'),
	\end{equation*}
	where $a=\lambda a'$, $b=\lambda b'$, $\lambda = \gcud(a,b)$. 
	
	We have the following result.
	
	\begin{theorem}
		\begin{equation*}
			\sum_{a,b\le x} F(a,b) = Lx^3 + O(x^2 (\log x)^3),
		\end{equation*}
		where the constant $L$ is given by \eqref{def_L}.
	\end{theorem}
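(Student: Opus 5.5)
The plan is to repeat the argument of Lemma \ref{Lemma_h} and Theorem \ref{Th_h} with an extra linear weight. By \eqref{form_F} we have $F(a,b)=(a+b)/f(a,b)$, and the summation range is symmetric in $a$ and $b$, so
\begin{equation*}
\sum_{a,b\le x} F(a,b) = 2\sum_{a,b\le x} \frac{a}{f(a,b)}.
\end{equation*}
Take $h(n)=1/n$. Then $g:=\mu*h=\phi_{-1}$ is the function from the proof of Corollary \ref{Cor_n_beta} with $t=-1$, so $|g(n)|\le 1$. Write $a=\delta i$, $b=\delta j$ with $\gcd(i,j)=1$, and $\delta=ds$. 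Exactly as in the proof of Lemma \ref{Lemma_h}, this gives
\begin{equation*}
\sum_{a,b\le x} F(a,b) = 2\sum_{d\le (2x)^{1/2}} g(d) \sum_{s\le 2x/d^2} ds\, W_d\!\left(\frac{x}{ds}\right),
\qquad
W_d(y):=\sum_{\substack{i,j\le y\\ \gcd(i,j)=1\\ d\mid (i+j)}} i .
\end{equation*}

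The key step, and the main obstacle, is a weighted analogue of Lemma \ref{Lemma_V}, uniform in $d\in\N$:
\begin{equation*}
W_d(y)=\frac{y^3}{2\zeta(2)\psi(d)}+O(y^2\log y).
\end{equation*}
To prove it, I would first sum over $i$ with $j$ fixed, where $\gcd(j,d)=1$. Partial summation applied to Lemma \ref{Lemma_1} gives
\begin{equation*}
\sum_{\substack{i\le y\\ \gcd(i,j)=1\\ i\equiv -j \ (\mathrm{mod}\ d)}} i=\frac{\phi(j)y^2}{2jd}+O\bigl(y\,2^{\omega(j)}\bigr).
\end{equation*}
This step needs care: the partial summation must use the uniformity of Lemma \ref{Lemma_1} in $a,m,q$, so that the error stays $O(y2^{\omega(j)})$. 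Summing over $j\le y$ coprime to $d$ and applying Lemma \ref{Lemma_2}, the main term becomes $\frac{y^2}{2d}\bigl(\frac{dy}{\zeta(2)\psi(d)}+O(2^{\omega(d)}\log y)\bigr)$. The error terms contribute $O(y\sum_{j\le y}2^{\omega(j)})=O(y^2\log y)$ and $O(y^2 2^{\omega(d)}\log y/d)=O(y^2\log y)$, using $2^{\omega(d)}\le d$.

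Inserting this into the double sum, the main term is
\begin{equation*}
\frac{x^3}{\zeta(2)}\sum_{d\le (2x)^{1/2}}\frac{g(d)}{d^2\psi(d)}\sum_{s\le 2x/d^2}\frac1{s^2}.
\end{equation*}
Since $\sum_{s\le 2x/d^2}s^{-2}=\zeta(2)+O(d^2/x)$, the truncation in $s$ costs $O\bigl(x^2\sum_{d\le(2x)^{1/2}}1/\psi(d)\bigr)=O(x^2\log x)$. Completing the $d$-sum costs $O\bigl(x^3\sum_{d>(2x)^{1/2}}d^{-3}\bigr)=O(x^2)$. The resulting constant is $\sum_{n\ge1}\phi_{-1}(n)/(n^2\psi(n))$, which by the computation in the proof of Corollary \ref{Cor_n_beta} with $t=-1$ (that is, Corollary \ref{Cor_1_per_n}) equals $L$ from \eqref{def_L}.

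The error term from $W_d$ contributes
\begin{equation*}
\ll\sum_{d^2s\le 2x} ds\left(\frac{x}{ds}\right)^2\log\frac{3x}{ds}\ll x^2\log x\sum_{s\le 2x}\frac1s\sum_{d\le(2x/s)^{1/2}}\frac1d\ll x^2(\log x)^3,
\end{equation*}
exactly as in the case $\beta=0$ of the proof of Theorem \ref{Th_h}. This gives the stated formula with error $O(x^2(\log x)^3)$.
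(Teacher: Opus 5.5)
Your proposal is correct and follows the paper's proof essentially step for step. The shared steps are the symmetrization to $2\sum_{a,b\le x} a/f(a,b)$, the expansion of $1/\gcd(\delta,i+j)$ via $g=\mu*h$ with $h(n)=1/n$ (the paper's $g(n)=\frac1n\prod_{p\mid n}(1-p)$), the weighted analogue of Lemma~\ref{Lemma_V} obtained from Lemmas~\ref{Lemma_1} and \ref{Lemma_2} by partial summation, and the $\beta=0$ error analysis of Theorem~\ref{Th_h}. You also spell out details the paper leaves implicit, namely the partial-summation error $O(y2^{\omega(j)})$ and the identification of the constant with $L$ via Corollary~\ref{Cor_n_beta} at $t=-1$.
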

	
	\begin{proof}
		According to \eqref{form_F},
		\begin{equation*}
			\sum_{a,b\le x} F(a,b) = \sum_{\substack{\delta i,\delta j\le x\\ \gcd(i,j)=1}} \lcm(\delta,i+j) =  \sum_{\substack{\delta i,\delta j\le x\\ \gcd(i,j)=1}} \frac{\delta(i+j)}{\gcd(\delta,i+j)} = 2 \sum_{\substack{\delta i,\delta j\le x\\ \gcd(i,j)=1}} \frac{\delta i}{\gcd(\delta,i+j)}. 
		\end{equation*}
		
		For the function $g(n)= \frac1{n} \prod_{p\mid n} (1-p)$ we have $\sum_{d\mid n} g(d)=\frac1{n}$ ($n\in \N$). We deduce that
		\begin{equation*}
			\sum_{a,b\le x} F(a,b) = 2 \sum_{\substack{\delta i,\delta j\le x\\ \gcd(i,j)=1}} \delta i \sum_{d\mid \gcd(\delta,i+j)} g(d) =
			2 \sum_{\substack{dsi,dsj \le x\\ \gcd(i,j)=1\\ d\mid (i+j)}} dsi\, g(d)
		\end{equation*}
		\begin{equation*}
			= 2 \sum_{d\le (2x)^{1/2}} dg(d) \sum_{s\le 2x/d^2} s \ \sum_{\substack{i,j \le x/(ds) \\ \gcd(i,j)=1\\ d\mid (i+j)}} i.
		\end{equation*}
		
		Here we need the following analogue of the formula in Lemma \ref{Lemma_V}:
		\begin{equation*}
			\sum_{\substack{i,j\le x\\ \gcd(i,j)=1\\ d\mid (i+j)}} i = \frac{x^3}{2\zeta(2)\psi(d)} + O\left(x^2\log x\right),
		\end{equation*}
		uniformly for $x>1$ and $d\in \N$. This follows from Lemmas \ref{Lemma_1} and \ref{Lemma_2} by partial summation.
		Now, the rest of the proof is similar to the proof of Theorem \ref{Th_h}.
	\end{proof}
	
	6. Consider the following $k$-variable generalizations of the functions $f$ and $F$, defined by \eqref{def_f} and \eqref{def_F}:
	\begin{equation*} 
		f(a_1,\ldots a_k)= \frac{\gcd(a_1\cdots a_k,a_1+\cdots +a_k)}{\gcd(a_1,\ldots,a_k)},
	\end{equation*}
	\begin{equation*} 
		F(a_1,\ldots a_k)= \frac{\lcm(a_1\cdots a_k,a_1+\cdots +a_k)}{\lcm(a_1,\ldots,a_k)},
	\end{equation*}
	which are also integer-valued functions.

	Steve Fan \\
	Department of Mathematics \\ 
	University of Georgia \\ 
	Athens, GA 30602, USA \\
	{\tt{Steve.Fan@uga.edu}}
	\\
	
	L\'aszl\'o T\'oth \\
	Department of Mathematics \\
	University of P\'ecs \\
	Ifj\'us\'ag \'utja 6, 7624 P\'ecs \\
	Hungary \\
	{\tt{ltoth@gamma.ttk.pte.hu}}

\end{document}